\newtheorem{lemma}{Lemma} 
\newtheorem{theorem}{Theorem}
\newtheorem{remark}{Remark}
\newtheorem{example}{Example}
\begin{document}
	
	\begin{center}
		\uppercase{\bf On Moebius maps which are characterised by the configuration of their dual maps}
		\vskip 20pt
		{\bf Fritz Schweiger} \\
		{ FB Mathematik, Universit\"at Salzburg, Hellbrunnerstr. 34, 5020 Salzburg, Austria}\\
		{\tt fritz.schweiger@sbg.ac.at}\\
	\end{center}
	\vskip 30pt

	\vskip 30pt
	\centerline{\bf Abstract}
	Here we consider piecewise fractional linear maps with three branches. The paper presents a study of invariant measures with densities which can be written as infinite series. These series either have infinitely many poles or they sum up to a function with just one pole. \\
		\noindent
	{\em Mathematics Subject Classification (2000)} : 11A63, 11K55, 28D05, 37A44\\
	{\em Key words} : fractional linear maps, f-expansions, invariant measures\\
	
	\section*{0. Introduction}
	Since the time when R\'enyi's influential paper \cite{Re} appeared the search for densities of invariant measures has seen many papers (a remarkable case of an unknown density of an invariant maesure is given in \cite{DKL}). This paper is a continuation of the investigations  in \cite{Sch06} and \cite{Sch18c} (for some additions and amendments see also \cite{Sch18a} and \cite{Sch18b}). In this paper Moebius maps (= piecewise fractional linear maps) with three branches are discussed. There is a given partition $a_0 = 0 < a_1 < a_2 < a_3 =1$ of the unit interval $B = [0,1]$ such that the map $T: [0,1]  \to [0,1]$ is bijective from $]a_{k-1}, a_k[$, $k =1,2,3$  onto $ ]0,1[$. The inverse map $V_k$,  $k =1,2,3$ is called an {\em (inverse) branch} and is given as $$V_k (x)= \frac{c_k + d_k x}{a_k + b_k x}, \, k =1,2,3. $$
	The Jacobians of these maps are denoted as $$\omega_k = \omega_k(x) = \frac{a_kd_k - b_k c_k}{(a_k + b_k x)^2}, \, k=1,2,3 .$$  We will use the same notation for the associated matrices $$V_k =  \left( \begin{array}{cc}a_k &  b_k
		\\ c_k & d_k \end{array} \right), \, k =1,2,3. $$
	Let $\epsilon = +$ stand for an increasing map and $\epsilon = -$ for a decreasing map. We call the map $T$ of type $(\epsilon_1, \epsilon_2, \epsilon_3)$, $\epsilon_k \in \{+, -\}, k =1,2,3$, if $V_k$ is increasing or decreasing. If the parameters satisfy some conditions the map $T$ is ergodic and admits a $\sigma$-finite invariant measure. The set $B(k) = V_k[0,1] = [a_{k-1}, a_k] $  is called a cylinder. Clearly these conventions also apply to maps with two or more branches.\\
	The question in these investigations is to give an explicit shape of the density of the invariant measure. In \cite{Sch06} essentially two possibilities are discussed. A map $T^{*}:  B^{*} \to B^{*}$ is called a {\em dual map}  if $B^{*}$ is an interval and $T^{*}$ is a fractional linear map whose branches are given by the adjoint  matrices  
	 $$V_k^{*}  =  \left( \begin{array}{cc}a_k &  c_k
		\\ b_k & d_k \end{array} \right), \, k =1,2,3. $$
	Then the function
	$$h(x) = \int_{B^{*}} \frac{dy}{(1+xy)^2}$$
	is (up to a multiplicative factor) the density of the invariant measure for $T$. 
	If there is map $\psi(t) = \frac{B +Dt}{A+Bt}$ such that $ \psi \circ T = T^{*} \circ  \psi$  (or equivalently $ \psi \circ V_k  = V_k^{*} \circ  \psi, \, k=1,2,3$) then the dual is called a {\em natural dual}. In the other case it is called an {\em exceptional dual}. If the set ${B^{*}}$ shrinks to a point $\xi$ one can see this case as a natural dual as well as an exceptional one. In this case the density of the invariant measure is
	$h(x)= \frac{1}{(1+\xi x)^2}$.\\
	However, there are other examples where we can write down an explicit form of the density of the invariant measure. The basic tool is the following lemma.
	\begin{lemma} Let $T: [0,1] \to [ 0,1]$ be a piecewise fractional linear map with two branches $V_\alpha$ and $V_\beta$. Let $h$ be the density of  the invariant measure for the map $T$. Then we define $S:  [0,1] \to [0,1]$  as $$ Sx = T^2x, x \in B(\alpha), Sx = Tx , x \in B(\beta).$$ If $g$ is the density of the invariant measure for $S$ then the relation
		$$h(x) = g(x) + g(V_\alpha x)\omega_\alpha(x) $$ holds.
	\end{lemma}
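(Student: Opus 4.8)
The plan is to recast the statement as an identity between fixed points of the Perron--Frobenius (transfer) operators of $T$ and $S$. Recall that a density $f$ is the density of the invariant measure for a piecewise fractional linear map with inverse branches $V_k$ and Jacobians $\omega_k$ precisely when it is fixed by the associated transfer operator, i.e. $f(x) = \sum_k f(V_k x)\,\omega_k(x)$; this is just the change-of-variables form of $\int_A f\,dx = \int_{T^{-1}A} f\,dx$. For $T$ this reads
\[
h(x) = h(V_\alpha x)\,\omega_\alpha(x) + h(V_\beta x)\,\omega_\beta(x).
\]
The goal is therefore to verify that the candidate $\tilde h(x) := g(x) + g(V_\alpha x)\,\omega_\alpha(x)$ satisfies this same fixed-point equation; since the invariant density is unique up to a multiplicative constant, this identifies $\tilde h$ with $h$.

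First I would determine the branch structure of $S$. On $B(\beta)$ we have $Sx = Tx$, so $V_\beta$ is an inverse branch of $S$. On $B(\alpha)$ we have $Sx = T^2x$; solving $T^2 x = y$ with $x \in B(\alpha)$ means first choosing a preimage $z = V_\alpha y$ or $z = V_\beta y$ of $y$ under $T$ and then applying $V_\alpha$ to land in $B(\alpha)$, so the two inverse branches here are $V_\alpha \circ V_\alpha$ and $V_\alpha \circ V_\beta$. Thus $S$ is a three-branch map, and by the chain rule its Jacobians are $\omega_\alpha(V_\alpha x)\,\omega_\alpha(x)$, $\omega_\alpha(V_\beta x)\,\omega_\beta(x)$, and $\omega_\beta(x)$. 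The fixed-point equation for $g$ becomes
\[
g(x) = g(V_\alpha V_\alpha x)\,\omega_\alpha(V_\alpha x)\,\omega_\alpha(x) + g(V_\alpha V_\beta x)\,\omega_\alpha(V_\beta x)\,\omega_\beta(x) + g(V_\beta x)\,\omega_\beta(x).
\]

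Then I would substitute the definition of $\tilde h$ into the right-hand side of the $T$-equation and expand. Writing $\tilde h(V_\alpha x) = g(V_\alpha x) + g(V_\alpha V_\alpha x)\,\omega_\alpha(V_\alpha x)$ and $\tilde h(V_\beta x) = g(V_\beta x) + g(V_\alpha V_\beta x)\,\omega_\alpha(V_\beta x)$, then multiplying by $\omega_\alpha(x)$ and $\omega_\beta(x)$ respectively and collecting terms, the right-hand side of the $T$-equation becomes
\[
g(V_\alpha x)\,\omega_\alpha(x) + g(V_\alpha V_\alpha x)\,\omega_\alpha(V_\alpha x)\,\omega_\alpha(x) + g(V_\alpha V_\beta x)\,\omega_\alpha(V_\beta x)\,\omega_\beta(x) + g(V_\beta x)\,\omega_\beta(x).
\]
The last three summands are exactly the right-hand side of the fixed-point equation for $g$ displayed above, and hence sum to $g(x)$; the whole expression therefore collapses to $g(x) + g(V_\alpha x)\,\omega_\alpha(x) = \tilde h(x)$. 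This is precisely the $T$-fixed-point equation, completing the verification.

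The computation itself is routine once the setup is correct; the step I expect to be the main obstacle, and the one requiring care, is identifying the three inverse branches of $S$ in the right composition order and factoring their Jacobians correctly via the chain rule, since any slip there would misalign the terms and destroy the clean recombination. Conceptually, the identity simply reflects that $S$ compresses each two-step excursion of $T$ through $B(\alpha)$ into a single step, so the $T$-invariant measure is recovered from the $S$-invariant one by adding back the contribution of the intermediate point, which is exactly the push-forward term $g(V_\alpha x)\,\omega_\alpha(x)$.
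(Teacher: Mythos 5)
Your proposal is correct and follows essentially the same route as the paper: identify the three inverse branches $V_\beta$, $V_\alpha V_\alpha$, $V_\alpha V_\beta$ of $S$, substitute the candidate $g(x)+g(V_\alpha x)\omega_\alpha(x)$ into the transfer-operator equation for $T$, and collapse three of the resulting terms via the fixed-point equation for $g$. You are slightly more careful than the paper in two places — you state the uniqueness argument explicitly and you write the Jacobian of $V_\alpha V_\beta$ correctly as $\omega_\alpha(V_\beta x)\omega_\beta(x)$, where the paper's displayed formula contains a typo ($\omega_\alpha(V_\alpha x)\omega_\beta(x)$) that its subsequent computation silently corrects.
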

\begin{proof} This lemma is special case of the use of the so-called {\em jump transformation} (see \cite{Sch95} and \cite{Sch18c}). The proof is easy. 
Note that the inverse branches of $S$ are given as $V_\beta x$, $V_{\alpha\alpha}x = V_\alpha (V_\alpha x) $, and $V_{\alpha \beta}x = V_\alpha( V_\beta x)$. Their Jacobians are denoted by $\omega_\beta(x)$,  $\omega_{\alpha \alpha}(x)= \omega_\alpha(V_\alpha x)\omega_\alpha(x) $, and $\omega_{\alpha\beta}(x)= \omega_\alpha(V_\alpha x)\omega_\beta(x) $. 
If
$$h(x)=  g(x) + g(V_\alpha x)\omega_\alpha (x) $$ then
$$h(V_\alpha x)\omega_\alpha(x)=  g(V_\alpha x)\omega_\alpha(x) + g(V_{\alpha\alpha}x)\omega_{\alpha\alpha}(x)$$ $$ h(V_\beta x)\omega_\beta(x)=  g(V_\beta x)\omega_\beta (x) + g(V_{\alpha \beta} x)\omega_{\alpha \beta}(x). $$
Then $$h(V_\alpha x)\omega_\alpha(x) + h(V_{\beta}x)\omega_\beta (x) = $$
$$ g(V_\alpha x)\omega_\alpha(x) + g(V_{\alpha\alpha}x)\omega_{\alpha\alpha}(x)  +g(V_\beta x)\omega_\beta(x) + g(V_{\alpha\beta}x)\omega_{\alpha\beta}(x) $$
 $$ = g(V_\alpha x)\omega_\alpha(x) + g(x) = h(x).$$
\end{proof}
\begin{remark} 
Clearly, a similar lemma is true if one exchanges the branches $V_\alpha$ and $V_\beta$.
\end{remark}
\begin{remark}
A recent application of the jump transformation is its use for the investigation of the so-called {\em odd-odd continued fraction algorithm see (\cite{KLL})}. 
\end{remark}
Since the density $h(x)$ of a piecewise fractional linear map with two branches can be determined explicitly we use the equation
$$g(x) = h(x) - g(V_\alpha x)\omega_\alpha(x)= h(x)- h(V_\alpha x)\omega_\alpha(x) + g(V_{\alpha\alpha}x)\omega_{\alpha \alpha}(x) - ... $$
to obtain the density $g(x)$ for some piecewise fractional linear maps with three branches as a possibly infinite series. We call this map with three branches constructed as just described the {\em $1$-step extension} of the given map with two branches. To simplify notation the map $T$ has the partition $0 < \frac{1}{2} < 1$. \\
We will discuss three cases of maps $S$ with three branches, which we label by $\lambda$, $\mu$, and $\nu$, namely the types $(+,+,+)$,
$(+,-,-)$, and $(-,+,+)$. In case $(+,+,+)$ the map $T[0,1]$  has an increasing branch $V_\lambda x = \frac{\lambda x}{1 + (2\lambda -1)x}$ and an increasing branch $V_\beta x= \frac{1 + (\beta -1)x}{2+(\beta-2)x}$. The $1$-step extension $S$ should be a map with partition $0 < \frac{1}{2} < \frac{2}{3}<1$. Then $V_\beta (\frac{1}{2}) = \frac{2}{3}$, $\beta =1$, and $V_\beta x = \frac{1}{2-x}$. Furthermore we see that $\mu = \lambda$ and  $\nu =1$.\\
In case $(+,-,-)$ the map $T[0,1]$  has an increasing branch $V_\alpha x = \frac{\alpha x}{1+(2\alpha -1)x}$ and a decreasing branch $V_\nu x =\frac{1 +(\nu -1)x}{1 +(2\nu -1)x}$. The $1$-step extension $S$ should be a map with partition $0 < \frac{1}{3} < \frac{1}{2}<1$. Then $V_\alpha x = \frac{1}{1+x}$, $\lambda =1$, and $\mu = \nu$. \\
In the third case $(-,+,+)$ the map $T[0,1]$  has a decreasing branch $V_\alpha x=\frac{1-x}{2 +(\alpha -2)x} $ and an increasing branch $V_\nu x = \frac{1 +(\nu -1)x}{2 +(\nu-2)x}$. Again the $1$-step extension $S$ should be a map with partition $0 < \frac{1}{3} < \frac{1}{2}<1$. Then $V_\alpha x = \frac{1-x}{2-x}$, $\mu =1$,  and $\nu = \lambda$.\\
The main result of this paper is that a map with three branches which satisfies some additional conditions given by the configuration of the dual map of the $1$-step extension in most cases is a $1$-step extension. The results are connected with the results of \cite{Sch18c} but the focus of this paper is quite different. One asks how far the configuration of the dual map $S^{*}$ determines the structure of $S$. The short last section is devoted to a generalization of the notion of $1$-extension. \\
In the sequel we will label the three branches by the letters $\lambda$, $\mu$, and $\nu$. The two branches of the map $T$ will be labelled by $\lambda$ and $\beta$ if the cylinder $B(\lambda)$ is kept as the first cylinder (the map $T$ jumps over the cylinder $B(\beta)$, which explains the name jump transformation). In a similar way they will be labelled by $\alpha$ and $\nu$ if the cylinder $B(\nu)$ is kept.\\
\begin{remark} Note that the Greek letters $\alpha$, $\beta$, $\lambda$, $\mu$, and $\nu$ will be used to mark the branch with the same letters as parameters. If a parameter is given a numerical value then we keep the letter as the label of the branch, e. g. if  $V_{\lambda}x = \frac{\lambda x}{1 + (2 \lambda -1)x}$ is a branch belonging to $B(\lambda) = [0,\frac{1}{2}]$ and we choose $\lambda =1$ then we write $V_{\lambda}x = \frac{x}{1 + x}$. 
\end{remark}
For the theoretical background of this study we refer to \cite{Sch95}. 

\section*{1. The case $(+,+,+)$}
We study the map  $V_{\lambda}x = \frac{\lambda x}{1 + (2 \lambda -1)x}$ and $ V_\beta x = \frac{1}{2-x}$.  Then the density of the invariant measure  is 
$$h(x)  = \frac{1}{(1- \lambda + (2 \lambda -1)x)(1-x)}.$$
The density of the invariant measure for the $1$-step extension $S$ is
$$g(x)= \frac{1}{1-x}\big(\sum_{n=0}^{\infty}\frac{(-1) 
	^n}{(n-1)\lambda +1 - ((n-2)\lambda +1)x}\big).$$
Now we consider maps $S$ on the partition $0 < \frac{1}{2} < \frac{2}{3}< 1$ with the three branches
$$V_{\lambda}x = \frac{\lambda x}{1 + (2\lambda -1)x}, V_{\mu}x = \frac{1 + (2\mu -1)x}{2 + (3\mu -2)x}, V_{\nu}x = \frac{2 +(\nu -2)x}{3 + (\nu -3)x}.$$
The parameters satisfy $0 < \lambda \leq 1$, $0 < \mu$, and $1 \leq \nu$. This map is piecewise linear  if $\lambda = \frac{1}{2}$, $ \mu = \frac {2}{3}$, and $\nu = 3$. The dual map is given as
$$V^{*}_{\lambda}y = 2\lambda -1 + \lambda y, V^{*}_{\mu}y = \frac{3 \mu -2+ (2\mu -1)y}{2 + y}, V^{*}_{\nu}y = \frac{\nu -3  +(\nu -2)y}{3 + 2y}.$$
A picture of the $1$-step extension suggests the following conditions. If $\eta$ is a fixed point for $V^{*}_{\nu}$ and $\xi$ is the fixed point for $V^{*}_{\lambda}$  then we require $V^{*}_{\mu}\eta= V^{*}_{\lambda}\eta$ and $V^{*}_{\mu}\xi = V^{*}_{\lambda \lambda}\eta$. \\
We start with the following lemma.
\begin{lemma} The fixed point $\xi$  of $V_{\lambda}^{*}$ is given as $\xi = \frac{2 \lambda -1}{1- \lambda}$
	(if $\lambda =1$ this means $\xi = \infty $). The fixed points $\eta$ of $V_{\nu}^{*}$ are $\eta = -1$ and $\eta = \frac{\nu-3}{2}$.
\end{lemma}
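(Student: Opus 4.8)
The plan is to solve the two fixed-point equations directly, treating the maps separately since $V^{*}_{\lambda}$ is affine while $V^{*}_{\nu}$ is a genuine fractional linear map.

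First I would handle $V^{*}_{\lambda}y = 2\lambda - 1 + \lambda y$. A fixed point $\xi$ satisfies $\xi = 2\lambda - 1 + \lambda\xi$, that is $(1-\lambda)\xi = 2\lambda - 1$, which for $\lambda \neq 1$ gives at once $\xi = \frac{2\lambda - 1}{1-\lambda}$. The only point needing comment is the degenerate case $\lambda = 1$: there the map reduces to the translation $y \mapsto y + 1$, which has no finite fixed point, so the fixed point lies at $\infty$, consistent with the limiting behaviour of the formula as $\lambda \to 1$.

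Next I would treat $V^{*}_{\nu}y = \frac{\nu - 3 + (\nu-2)y}{3 + 2y}$. Clearing the denominator in the fixed-point equation $\eta(3 + 2\eta) = \nu - 3 + (\nu-2)\eta$ yields the quadratic $2\eta^2 + (5-\nu)\eta - (\nu-3) = 0$. I would first verify that $\eta = -1$ is always a root, since substitution gives $2 - (5-\nu) - (\nu-3) = 0$ independently of $\nu$. Factoring out $(\eta+1)$ then leaves $(\eta+1)\bigl(2\eta - (\nu-3)\bigr) = 0$, whose remaining root is $\eta = \frac{\nu-3}{2}$.

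There is no genuine obstacle here; the statement is a routine computation. The only subtleties worth recording are the degeneration of $V^{*}_{\lambda}$ at $\lambda = 1$ (whence the fixed point escapes to infinity) and the observation that, since a fractional linear map has at most two fixed points, the two roots $-1$ and $\frac{\nu-3}{2}$ exhaust the fixed points of $V^{*}_{\nu}$, so the list in the lemma is complete.
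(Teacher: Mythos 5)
Your computation is correct and follows exactly the route the paper takes: the value of $\xi$ from the affine fixed-point equation, and the roots of the same quadratic $2\eta^2 + (5-\nu)\eta + 3 - \nu = 0$ for $\eta$. The extra remarks about the $\lambda = 1$ degeneration and the completeness of the root list are fine but not needed beyond what the paper already asserts.
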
 
\begin{proof} The value for $\xi$ is immediate. The values for $\eta$ follow from the quadratic equation $2 \eta^2 + (5 - \nu)\eta + 3 - \nu =0$.
	\end{proof}
\begin{lemma}
	If $V^{*}_{\mu}\eta= V^{*}_{\lambda}\eta$ and $\eta = -1$ then $\lambda = \mu$.\\
	If  $V^{*}_{\mu}\eta= V^{*}_{\lambda}\eta$ and $\eta = \frac{\nu-3}{2}$ then $4 \mu \nu = \lambda \nu^2 + 2 \lambda \nu + \lambda$.
\end{lemma}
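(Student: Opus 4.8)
The plan is to prove both statements by direct substitution, since the three dual branches are given by explicit formulas and the fixed points $\eta$ are supplied by the preceding lemma. No structural machinery is needed; the entire content is the evaluation of the two affine/fractional-linear expressions $V^{*}_{\lambda}y = 2\lambda - 1 + \lambda y$ and $V^{*}_{\mu}y = \frac{3\mu - 2 + (2\mu - 1)y}{2 + y}$ at the prescribed value of $\eta$, followed by equating and simplifying.

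For the first statement I would set $\eta = -1$. Since $V^{*}_{\lambda}$ is affine, $V^{*}_{\lambda}(-1) = \lambda - 1$ is immediate. For $V^{*}_{\mu}(-1)$ the only point requiring a moment's attention is that the denominator $2 + y$ of the fractional-linear branch does not vanish at $y = -1$; indeed $2 + (-1) = 1$, so the expression is well defined, and the numerator $3\mu - 2 + (2\mu - 1)(-1)$ collapses to $\mu - 1$. Equating $V^{*}_{\mu}(-1) = V^{*}_{\lambda}(-1)$ then reads $\mu - 1 = \lambda - 1$, giving $\lambda = \mu$.

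For the second statement I would substitute $\eta = \frac{\nu - 3}{2}$. On the affine side, combining over the common denominator $2$ yields $V^{*}_{\lambda}\!\left(\frac{\nu-3}{2}\right) = \frac{\lambda(\nu + 1) - 2}{2}$. On the fractional-linear side the denominator becomes $2 + \frac{\nu - 3}{2} = \frac{\nu + 1}{2}$, and after clearing the inner fraction the value simplifies to $\frac{2\mu\nu - \nu - 1}{\nu + 1}$. Setting these two expressions equal and cross-multiplying produces $2(2\mu\nu - \nu - 1) = (\nu + 1)\bigl(\lambda(\nu + 1) - 2\bigr)$; the additive terms $-2\nu - 2$ appear on both sides and cancel, leaving $4\mu\nu = \lambda(\nu + 1)^2$, which expands to $4\mu\nu = \lambda\nu^2 + 2\lambda\nu + \lambda$, exactly the claimed identity.

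I expect no genuine obstacle, as the argument is a routine algebraic reduction; the only things to watch are the nonvanishing of the denominators at the chosen fixed points (guaranteed here because $\nu \geq 1$, so $\nu + 1 \neq 0$ and $2 + \eta \neq 0$ in both cases) and the careful cancellation in the second case that makes the $\nu$-linear terms drop out. I would present the two evaluations as short displayed computations and then read off each conclusion directly.
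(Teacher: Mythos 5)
Your proposal is correct and follows exactly the paper's argument: both verify the identity by substituting the given values of $\eta$ into the explicit formulas for $V^{*}_{\lambda}$ and $V^{*}_{\mu}$, arriving at the same intermediate expressions $\frac{2\mu\nu - \nu - 1}{\nu+1} = \frac{\lambda\nu + \lambda - 2}{2}$ and the same conclusion $4\mu\nu = \lambda(\nu+1)^2$. The only difference is that you make the nonvanishing of the denominators explicit, which the paper leaves tacit.
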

\begin{proof}  
The condition  $V^{*}_{\mu}\eta= V^{*}_{\lambda}\eta$ implies
$$ \frac{3 \mu - 2 + (2\mu -1) \eta}{2 + \eta} = 2 \lambda - 1 + \lambda \eta.$$
If $\eta = -1$ then $\lambda -1 = \mu -1$. Hence we get $\lambda = \mu$.\\
If $\eta = \frac{\nu-3}{2}$ then we obtain
$$\frac{2 \mu \nu - \nu -1}{\nu +1} = \frac{\lambda \nu + \lambda -2}{2}$$
and the equation
$$ 4 \mu \nu = \lambda \nu^2 + 2\lambda \nu + \lambda = \lambda (\nu +1)^2 .$$
	\end{proof}
\begin{lemma}
Let $\theta$ be the greater fixed point of $V^{*}_{\mu}$. Then $\theta = \xi$ if and only of $\lambda^2 \mu + \lambda = \mu.$
\end{lemma}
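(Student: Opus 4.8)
The plan is to characterise $\theta = \xi$ by the requirement that $\xi$ be a fixed point of $V^{*}_{\mu}$, and then to reduce the resulting identity to the stated condition. First I would solve $V^{*}_{\mu}\theta = \theta$ to obtain the fixed-point equation: clearing the denominator in $\frac{3\mu - 2 + (2\mu-1)\theta}{2+\theta} = \theta$ yields the quadratic
$$\theta^2 + (3 - 2\mu)\theta + (2 - 3\mu) = 0,$$
whose two real roots (the discriminant is $4\mu^2 + 1 > 0$) are the fixed points of $V^{*}_{\mu}$. Saying $\theta = \xi$ then amounts to saying that $\xi$ is a root of this quadratic, and moreover the larger of the two.

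For the algebraic core I would exploit that $\xi$ is the fixed point of $V^{*}_{\lambda}$, so $\xi = \frac{2\lambda-1}{1-\lambda}$ by Lemma~2, which gives the convenient identities $\xi + 1 = \frac{\lambda}{1-\lambda}$, $\xi + 2 = \frac{1}{1-\lambda}$, and $2\xi + 3 = \frac{\lambda+1}{1-\lambda}$. Writing the quadratic in the factored form $(\theta+1)(\theta+2) = \mu(2\theta+3)$ and substituting $\theta = \xi$ turns the root condition into $\frac{\lambda}{(1-\lambda)^2} = \frac{\mu(\lambda+1)}{1-\lambda}$, which after clearing denominators collapses to $\lambda = \mu(1-\lambda^2)$, i.e. $\lambda^2\mu + \lambda = \mu$. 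Since every step here is reversible ($1-\lambda$ never vanishes for $0 < \lambda < 1$), this already establishes the equivalence between the assertion ``$\xi$ is a fixed point of $V^{*}_{\mu}$'' and the stated condition.

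The one point that needs genuine care — and the step I expect to be the real obstacle — is showing that when the condition holds the root $\xi$ is the \emph{greater} fixed point $\theta$, not the smaller one, so that the equivalence with $\theta$ (rather than with an unspecified root) is correct. I would settle this by comparing $\xi$ with the midpoint $\mu - \frac{3}{2}$ of the two roots: using $\mu = \frac{(\xi+1)(\xi+2)}{2\xi+3}$ one finds $2\xi - 2\mu + 3 = \frac{2\xi^2 + 6\xi + 5}{2\xi+3}$, whose numerator has negative discriminant and is therefore positive, while the denominator equals $\frac{\lambda+1}{1-\lambda} > 0$ on the admissible range $0 < \lambda < 1$. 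Hence $\xi$ lies above the midpoint and is indeed the larger root, completing the argument.
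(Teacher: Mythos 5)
Your proof is correct and rests on the same two ingredients as the paper's: the fixed-point quadratic $\theta^2+(3-2\mu)\theta+(2-3\mu)=0$ and the formula $\xi=\frac{2\lambda-1}{1-\lambda}$. The execution differs in a useful way. The paper writes the greater root explicitly as $\theta=\frac{2\mu-3+\sqrt{4\mu^2+1}}{2}$, sets it equal to $\xi$, rearranges to $2\lambda\mu+\lambda-2\mu+1=(1-\lambda)\sqrt{4\mu^2+1}$, and squares, asserting that both sides are non-negative (a claim which, in the reverse direction, itself needs a small verification under the derived condition); it disposes of the smaller root by the range argument $\theta\ge V^*_{\mu}(-1)=\mu-1$. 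You instead substitute $\xi$ directly into the factored form $(\theta+1)(\theta+2)=\mu(2\theta+3)$, which avoids the radical entirely and makes every step transparently reversible, and you then settle which root $\xi$ is by comparing it with the midpoint $\mu-\tfrac32$ of the two roots, showing $2\xi-2\mu+3=\frac{2\xi^2+6\xi+5}{2\xi+3}>0$. That last step is a genuine improvement in rigor: it proves that whenever the condition holds, $\xi$ is automatically the \emph{greater} fixed point, which is exactly what the stated equivalence with $\theta$ requires, whereas the paper only argues that the greater root is the one of interest in context. Both arguments implicitly assume $0<\lambda<1$ (for $\lambda=1$ one has $\xi=\infty$ and both sides of the equivalence fail), which is harmless.
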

\begin{proof} The equation $\theta^2 + (3-2 \mu)\theta + 2 -3 \mu =0$ has the solutions 
	$\theta = \frac{2 \mu -3 + \sqrt{4 \mu^2 +1}}{2}$ and 
	$\theta = \frac{2 \mu -3 - \sqrt{4 \mu^2 +1}}{2}$. Since $V^{*}_{\mu}\eta \geq V^{*}_{\mu}(-1) = \mu -1$ implies $\theta \geq \mu -1$ only the greater root is of interest.\\
	The condition  $\theta = \xi$ means 
	$$ \frac{2 \mu -3 + \sqrt{4 \mu^2 +1}}{2} = \frac{2 \lambda -1}{1-\lambda}.$$
	This is equivalent to
	$$2\lambda \mu + \lambda - 2 \mu +1 = (1-\lambda)\sqrt{4 \mu^2 +1}.$$
	Since both sides are non-negative we can take squares and find equivalently the condition
	$$\lambda^2 \mu + \lambda = \mu.$$
	\end{proof}
\begin{lemma} Assume $V^{*}_{\mu}\eta= V^{*}_{\lambda}\eta$  and suppose additionally $V^{*}_{\mu}\xi= V^{*}_{\lambda \lambda}\eta$. If $\lambda = \mu$ then $\nu = 1$. If $\mu > \lambda$ then the given map $T$ has a natural dual and the density of the invariant measure is $g(x)= \frac{1}{(1+ \xi x)^2}$.
	\end{lemma}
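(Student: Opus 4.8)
The plan is to work throughout with the non-trivial fixed point $\eta = \frac{\nu-3}{2}$ of $V^{*}_{\nu}$ furnished by Lemma~2, since this is the fixed point that actually enters the configuration: the other root $\eta=-1$ only reproduces the identity $\lambda=\mu$ through Lemma~3 while leaving $\nu$ completely free (indeed for $\eta=-1$ the second hypothesis becomes an automatic identity), so it cannot yield the stated conclusions and must be the spurious endpoint. Having fixed $\eta = \frac{\nu-3}{2}$, I would first feed the hypothesis $V^{*}_{\mu}\eta = V^{*}_{\lambda}\eta$ into the second part of Lemma~3 to obtain $4\mu\nu = \lambda(\nu+1)^2$, hence $\mu = \frac{\lambda(\nu+1)^2}{4\nu}$. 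Because $(\nu+1)^2-4\nu=(\nu-1)^2\ge 0$, this already gives $\mu\ge\lambda$ with equality precisely when $\nu=1$; in particular $\lambda=\mu$ forces $\nu=1$, which settles the first assertion.

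For the second assertion I would assume $\mu>\lambda$, so that $\nu\neq 1$, and then bring in the extra hypothesis $V^{*}_{\mu}\xi = V^{*}_{\lambda\lambda}\eta$. Using $\xi=\frac{2\lambda-1}{1-\lambda}$ from Lemma~2, a short computation gives $2+\xi=\frac{1}{1-\lambda}$ and $V^{*}_{\mu}\xi=\mu(1+\lambda)-1$, while iterating $V^{*}_{\lambda}y=2\lambda-1+\lambda y$ yields $V^{*}_{\lambda\lambda}\eta=(2\lambda-1)(1+\lambda)+\lambda^2\eta$. Substituting $\eta=\frac{\nu-3}{2}$ turns the hypothesis into $\mu(1+\lambda)=\lambda+\frac{1}{2}\lambda^2(\nu+1)$. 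Eliminating $\mu$ between this relation and $4\mu\nu=\lambda(\nu+1)^2$ and clearing denominators, I expect everything to collapse to the factored identity $(\nu-1)\big[(\nu-1)-\lambda(\nu+1)\big]=0$. As $\nu\neq 1$, this forces $\nu=\frac{1+\lambda}{1-\lambda}$ and therefore $\mu=\frac{\lambda}{1-\lambda^2}$.

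It then remains to identify this as the degenerate (natural) dual. The value $\mu=\frac{\lambda}{1-\lambda^2}$ is exactly the solution of $\lambda^2\mu+\lambda=\mu$, so Lemma~4 gives that the greater fixed point of $V^{*}_{\mu}$ equals $\xi$; and substituting $\nu=\frac{1+\lambda}{1-\lambda}$ into $\eta=\frac{\nu-3}{2}$ gives directly $\eta=\frac{2\lambda-1}{1-\lambda}=\xi$. Hence all three dual branches $V^{*}_{\lambda}$, $V^{*}_{\mu}$, $V^{*}_{\nu}$ share the common fixed point $\xi$, so the invariant set $B^{*}$ of the dual collapses to the single point $\xi$. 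By the remarks in the Introduction this case is simultaneously a natural and an exceptional dual, and the density of the invariant measure is $g(x)=\frac{1}{(1+\xi x)^2}$, as claimed.

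The routine part is the explicit algebra of the two middle paragraphs; the step I expect to be most delicate is the elimination, where the two rational relations in $\lambda,\mu,\nu$ must simplify to the clean factorization $(\nu-1)[(\nu-1)-\lambda(\nu+1)]=0$ rather than to a generic quartic. The genuine conceptual crux, however, is the final paragraph: recognizing that the parameter values forced by the two hypotheses are exactly those making $\theta=\xi$ (via Lemma~4) and $\eta=\xi$ hold at the same time, so that the three dual fixed points coincide and $B^{*}$ degenerates to a point — which is precisely the mechanism producing the single-pole density $\frac{1}{(1+\xi x)^2}$.
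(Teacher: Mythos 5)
Your proof is correct, and it reaches the paper's key relation $\mu(1-\lambda^2)=\lambda$ (equivalently $\mu-\lambda=\lambda^2\mu$) by a somewhat different route. The paper computes $V^{*}_{\mu}\xi=\lambda\mu+\mu-1$ and $V^{*}_{\lambda\lambda}\eta=2\lambda^2+\lambda-1+\lambda^2\eta$ just as you do, but then solves \emph{both} hypotheses for $\nu$ — obtaining $\nu=\frac{2\lambda\mu+2\mu-\lambda^2-2\lambda}{\lambda^2}$ from the second condition and $\nu=\frac{2\mu-\lambda+2\sqrt{\mu^2-\lambda\mu}}{\lambda}$ from $4\mu\nu=\lambda(\nu+1)^2$ — and equates them, passing through the irrational relation $\mu-\lambda=\lambda\sqrt{\mu^2-\lambda\mu}$ before squaring; you instead eliminate $\mu$ and land on the clean factorization $(\nu-1)\bigl[(\nu-1)-\lambda(\nu+1)\bigr]=0$, which avoids the square root entirely and yields the explicit values $\nu=\frac{1+\lambda}{1-\lambda}$, $\mu=\frac{\lambda}{1-\lambda^2}$. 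Your derivation of the first assertion is also leaner: you get $\mu\ge\lambda$ with equality iff $\nu=1$ directly from $4\mu\nu=\lambda(\nu+1)^2$, whereas the paper routes this through the second hypothesis. For the final step the paper exhibits the conjugating map $\psi(t)=\frac{2\lambda-1+(\lambda\mu+\mu-4\lambda+1)t}{1-\lambda+(2\lambda-1)t}$ and checks $\psi(0)=\psi(1)$, while you observe that the forced parameters make $\theta=\xi$ (Lemma 4) and $\eta=\xi$, so all three dual fixed points coincide and $B^{*}$ collapses to the point $\xi$; both arguments exhibit the same degenerate (point) dual and hence the density $\frac{1}{(1+\xi x)^2}$. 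A small bonus of your write-up is that you make explicit why the branch $\eta=-1$ must be discarded (there the second hypothesis is automatic and $\nu$ stays free), a point the paper leaves implicit.
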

\begin{proof} We calculate $V^{*}_{\mu}\xi = \lambda \mu + \mu -1$ and $V^{*}_{\lambda \lambda}\eta = 2 \lambda^2 + \lambda -1 + \lambda^2 \eta$. If $\eta = -1$ then $\lambda \mu + \mu -1 = \lambda^2 + \lambda -1$. Hence we see again $\lambda = \mu$. If $\eta = \frac{\nu -3}{2}$ then we find
	$$\nu = \frac{2\lambda \mu +2\mu -\lambda^2 -2\lambda}{\lambda^2}.$$ If $\lambda = \mu$ then $\nu = 1$ (and $\eta =-1$). \\
	The equation $4 \mu \nu = \lambda \nu^2 + 2\lambda \nu +\lambda$ gives 
	$$\nu = \frac {2 \mu - \lambda + 2 \sqrt{\mu^2 -\lambda \mu}}{\lambda}.$$ Since $\nu \geq 1$ the negative sign of the square root is not allowed. \\
	Now we equate
	$$ \frac{2\lambda \mu +2\mu -\lambda^2 -2\lambda}{\lambda^2} = \frac {2 \mu - \lambda + 2 \sqrt{\mu^2 -\lambda \mu}}{\lambda}$$ and we find
	$$\mu - \lambda = \lambda \sqrt{\mu^2 - \lambda}.$$ Therefore $\mu \geq \lambda$. We eventually deduce 
	$$\mu - \lambda = \lambda^2 \mu.$$
	If we want to find a natural dual and a suitable map $\psi(t) = \frac{B+Dt}{A+Bt}$ the standard method leads to 
	$$\psi(t) = \frac{2 \lambda -1 + (\lambda \mu + \mu - 4\lambda +1)t}{1-\lambda + (2\lambda -1)t}.$$
	The condition $\psi(0) = \psi(1)$ leads to 
	$\frac{2\lambda -1}{1-\lambda}= \frac{\lambda \mu + \mu - 2 \lambda}{\lambda}$ and to 
	$\mu - \lambda = \lambda^2 \mu$. Hence
	$g(x)= \frac{1}{(1+ \xi x)^2}$.
	\end{proof}
\begin{theorem} Let $S$ be a piecewise fractional linear map of type $(+,+,+)$ which satifies the conditions 
	$$ V^{*}_{\mu}\eta= V^{*}_{\lambda}\eta, \,  V^{*}_{\mu}\xi = V^{*}_{\lambda \lambda}\eta.$$ 
If $\xi = \eta$ then $S$ has a natural dual and the density of the invariant measure is given as
	$$g(x)= \frac{1}{(1+ \xi x)^2}.$$
If $\xi \neq \eta$ then $\lambda = \mu$ and $\nu =1$. The map is a $1$-step extension and  the density of the invariant measure is given as
$$g(x)= \frac{1}{1-x}\big(\sum_{n=0}^{\infty}\frac{(-1) 
	^n}{(n-1)\lambda +1 - ((n-2)\lambda +1)x}\big).$$
\end{theorem}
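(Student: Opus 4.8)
The plan is to reduce the entire statement to the dichotomy already isolated in Lemma 5 and then to translate that dichotomy into the condition $\xi = \eta$ versus $\xi \neq \eta$. Under the standing hypotheses $V^{*}_{\mu}\eta = V^{*}_{\lambda}\eta$ and $V^{*}_{\mu}\xi = V^{*}_{\lambda\lambda}\eta$, Lemma 5 leaves exactly two possibilities: either $\lambda = \mu$, in which case $\nu = 1$; or $\mu > \lambda$, in which case $S$ has a natural dual, the relation $\mu - \lambda = \lambda^2\mu$ holds, and $g(x) = \frac{1}{(1+\xi x)^2}$. Everything will follow once I show that these two branches are distinguished precisely by whether $\xi = \eta$.

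First I would treat the branch $\lambda = \mu$. Here Lemma 5 gives $\nu = 1$, so by Lemma 2 the two fixed points of $V^{*}_{\nu}$ collapse to the single value $\eta = -1$. Since $\xi = \frac{2\lambda - 1}{1-\lambda}$ equals $-1$ only when $\lambda = 0$, which is excluded by $0 < \lambda \leq 1$ (and $\xi = \infty$ when $\lambda = 1$), I conclude $\xi \neq \eta$ throughout this branch. Moreover, with $\lambda = \mu$ and $\nu = 1$ the map $S$ carries exactly the parameters of the $1$-step extension introduced at the start of Section 1, so its density is the displayed series, which I simply invoke rather than rederive.

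Next I would treat the branch $\mu > \lambda$. Here $\eta = -1$ would force $\lambda = \mu$ by Lemma 3, so necessarily $\eta = \frac{\nu-3}{2}$. The relation $\mu - \lambda = \lambda^2\mu$ rewrites as $\mu = \frac{\lambda}{1-\lambda^2}$; substituting this into the expression $\nu = \frac{2\lambda\mu + 2\mu - \lambda^2 - 2\lambda}{\lambda^2}$ recorded in the proof of Lemma 5 and simplifying yields $\nu = \frac{1+\lambda}{1-\lambda}$, whence $\eta = \frac{\nu-3}{2} = \frac{2\lambda-1}{1-\lambda} = \xi$. Thus $\mu > \lambda$ forces $\xi = \eta$. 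Combining the two branches gives the equivalences $\xi = \eta \Leftrightarrow \mu > \lambda$ and $\xi \neq \eta \Leftrightarrow \lambda = \mu$, so: if $\xi = \eta$ then $\mu > \lambda$ and Lemma 5 delivers the natural dual with $g(x) = \frac{1}{(1+\xi x)^2}$; if $\xi \neq \eta$ then $\lambda = \mu$, hence $\nu = 1$ and $S$ is the $1$-step extension with the series density.

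The only genuinely computational point is the simplification in the third paragraph establishing $\nu = \frac{1+\lambda}{1-\lambda}$; all the rest is bookkeeping among Lemmas 2, 3, and 5. The subtlety I would flag is the degenerate behaviour at $\nu = 1$, where the two fixed points of $V^{*}_{\nu}$ coincide at $-1$, and at $\lambda = 1$, where $\xi = \infty$; in both of these the strict inequality $\xi \neq \eta$ still holds, so the case split stays clean and no boundary value spuriously lands in the natural-dual branch.
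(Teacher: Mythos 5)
Your proposal is correct and follows essentially the same route as the paper, which simply states that the theorem "follows from the preceding lemmas"; you supply the one detail the paper leaves implicit, namely the verification (via $\mu = \frac{\lambda}{1-\lambda^2}$ and $\nu = \frac{1+\lambda}{1-\lambda}$, hence $\eta = \frac{\nu-3}{2} = \frac{2\lambda-1}{1-\lambda} = \xi$) that the two branches of Lemma 5 correspond exactly to $\xi = \eta$ and $\xi \neq \eta$. The only point the paper adds that you gloss over is the derivation of the series density from the iterates $V_\beta^n x = \frac{n-(n-1)x}{n+1-nx}$, which you instead invoke from the introduction; that is acceptable since the formula is stated there.
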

\begin{proof} The theorem follows from the preceding lemmas.  To obtain the density in the second case one uses that the $n$-th iterate of $V_\beta x =  \frac{1}{2-  x}$ is given as $V_{\beta}^n x = \frac{n - (n-1)x}{n+1 - n x}$. 
\end{proof}
\section*{2. The case $(+,-,-)$}

We start with  the map  $V_{\alpha}x = \frac{x}{1 + x}$ and $V_{\nu}x = \frac{1 + (\nu -1)x}{1 + (2\nu -1)x}$. The density of the associated invariant measure is $$h(x)= \frac{1}{x(1+ (\nu-1)x}.$$
Then the density of the invariant measure of the $1$-step extension is given as
	$$g(x)=\frac{1}{x}  \sum_{n=0}^{\infty} \frac{(-1)^n }{1 + (n+\nu -1)x} .$$	 
Now we consider the map $S$ with the partition $0 < \frac{1}{3} < \frac{1}{2}< 1$ and the three branches
$$V_{\lambda}x= \frac{\lambda x}{1 + (3\lambda -1)x}, \, V_{\mu}x = \frac{1 + (\mu -1)x}{2 + (3\mu -2)x}, V_{\nu}x = \frac{1 + (\nu -1)x}{1 + (2\nu -1)x}.$$
The parameters satisfy $0 < \lambda \leq 1$, $0 < \mu$, and $0 < \nu$. The dual map is given as
$$V^{*}_{\lambda}y = 3\lambda -1 + \lambda y, V^{*}_{\mu}y = \frac{3 \mu -2+ (\mu -1)y}{2 + y}, V^{*}_{\nu}y = \frac{2\nu -1  +(\nu -1)y}{1 + y}.$$
\begin{lemma} The fixed point of $V^{*}_{\lambda}$ is $\xi = \frac{3 \lambda -1}{1-\lambda}$ (for $\lambda =1$ this means $\xi = \infty$).
\end{lemma}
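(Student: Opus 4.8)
The plan is to solve the fixed-point equation $V^{*}_{\lambda} y = y$ directly. The essential observation is that the branch $V^{*}_{\lambda}$ listed above, unlike the other two dual branches $V^{*}_{\mu}$ and $V^{*}_{\nu}$, is purely affine: its defining matrix has a vanishing lower-left entry, so the ``denominator'' collapses to a constant and $V^{*}_{\lambda} y = 3\lambda - 1 + \lambda y$ is simply a map of the form $y \mapsto \lambda y + (3\lambda - 1)$. Consequently the fixed-point condition is a single linear equation, not a quadratic as it was for $V^{*}_{\nu}$ in the earlier case $(+,+,+)$, and there is exactly one finite fixed point whenever the slope $\lambda$ differs from $1$.

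First I would impose $V^{*}_{\lambda} \xi = \xi$, that is
$$3\lambda - 1 + \lambda \xi = \xi.$$
Collecting the terms in $\xi$ gives $3\lambda - 1 = (1-\lambda)\xi$, and dividing by the factor $1 - \lambda$ (legitimate precisely when $\lambda \neq 1$) yields the asserted value
$$\xi = \frac{3\lambda - 1}{1 - \lambda}.$$
This is the whole of the computation; there is no genuine obstacle, since solving a degree-one equation in one unknown is immediate.

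The only point deserving a remark is the boundary value $\lambda = 1$, which is permitted by the standing hypothesis $0 < \lambda \leq 1$. There the denominator $1 - \lambda$ vanishes, reflecting the fact that $V^{*}_{\lambda} y = 2 + y$ becomes a pure translation and therefore has no finite fixed point; the fixed point escapes to infinity, which is exactly why the statement records the convention $\xi = \infty$ in that case. I would close by noting that this mirrors, with the constant term $2\lambda - 1$ replaced by $3\lambda - 1$, the analogous fixed-point computation for $V^{*}_{\lambda}$ carried out in the case $(+,+,+)$, the difference tracing back to the differing first cylinder $[0,\tfrac{1}{3}]$ rather than $[0,\tfrac{1}{2}]$ and hence to the modified branch $V_{\lambda} x = \frac{\lambda x}{1 + (3\lambda - 1)x}$.
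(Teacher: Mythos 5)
Your computation is correct and is exactly the one-line linear solve that the paper dismisses as ``immediate'': setting $3\lambda - 1 + \lambda\xi = \xi$ gives $\xi = \frac{3\lambda-1}{1-\lambda}$, with the translation case $\lambda = 1$ accounting for $\xi = \infty$. This matches the paper's (unwritten) argument in both substance and spirit.
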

\begin{proof} The proof is immediate.
	\end{proof}
We define $\eta = V^{*}_{\mu}\xi$ and we require the following conditions: $V^{*}_{\mu} \xi = V^{*}_{\nu} \xi $ and $ \ V^{*}_{\mu}\eta = V^{*}_{\nu \nu}\eta$.
\begin{lemma} Under these conditions the equality $\mu = \nu$ is equivalent to $\lambda =1$.
\end{lemma}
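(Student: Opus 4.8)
The plan is to show that the first hypothesis $V^{*}_{\mu}\xi = V^{*}_{\nu}\xi$ already forces the equivalence; the second condition $V^{*}_{\mu}\eta = V^{*}_{\nu\nu}\eta$ will not be needed here (it serves to fix the one remaining free parameter in the subsequent lemmas). Throughout I would use the explicit fixed point $\xi = \frac{3\lambda-1}{1-\lambda}$ from the previous lemma, which is finite exactly when $\lambda \neq 1$ and is $\infty$ exactly when $\lambda = 1$, together with the given forms of $V^{*}_{\mu}$ and $V^{*}_{\nu}$.

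First I would treat the generic case $\lambda \neq 1$, where $\xi$ is finite. One checks that the denominators $2+\xi$ and $1+\xi$ do not vanish on the range $0<\lambda<1$, so clearing them in $V^{*}_{\mu}\xi = V^{*}_{\nu}\xi$ is legitimate and produces a polynomial identity in $\xi$. Collecting terms, the quadratic and linear coefficients both turn out to be $\mu-\nu$, so completing the square is the natural move, and I expect the condition to collapse to the compact form
$$(\mu - \nu)(\xi + 2)^2 = \mu.$$
This is the crux of the argument: since the right-hand side equals $\mu > 0$ while $(\xi+2)^2 \geq 0$, the factor $\mu-\nu$ must be strictly positive, so in fact $\mu > \nu$, and in particular $\mu \neq \nu$. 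Contrapositively, $\mu = \nu$ is incompatible with $\xi$ being finite, hence forces $\lambda = 1$.

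For the converse I would set $\lambda = 1$, so that $\xi = \infty$ and the cleared-denominator computation above degenerates. Here I pass to the limit (equivalently, read the maps projectively): $V^{*}_{\mu}(\infty) = \mu-1$ and $V^{*}_{\nu}(\infty) = \nu-1$, so $V^{*}_{\mu}\xi = V^{*}_{\nu}\xi$ now reads $\mu-1 = \nu-1$, giving $\mu = \nu$. Combining the two directions yields the stated equivalence.

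The only genuine obstacle is the bookkeeping around the degenerate fixed point: when $\lambda = 1$ the value $\xi=\infty$ makes the polynomial identity meaningless, so the two directions must be handled by slightly different arguments — the clean square identity for finite $\xi$ and a limiting argument at infinity. Once the square is completed, the sign constraint coming from the parameter range $0<\mu$ does all the work, and the genuinely quadratic fixed-point structure of $V^{*}_{\nu}$ (hence the second hypothesis) never enters.
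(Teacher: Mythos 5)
Your proof is correct, but it takes a genuinely different route from the paper's. The paper uses \emph{both} hypotheses: from $V^{*}_{\mu}\eta=V^{*}_{\nu\nu}\eta$ it first solves for $\eta=\frac{3\mu-2\nu-1}{-\mu+\nu+1}$ (after ruling out $\eta=-2$), substitutes this into the expression for $\xi$ obtained by inverting $V^{*}_{\nu}$ at $\eta=V^{*}_{\nu}\xi$, and arrives at $\xi=-2+\frac{-2\mu+\nu}{\nu^{2}-\mu\nu-2\mu+2\nu}$, whose denominator factors as $(\nu-\mu)(\nu+2)$; setting $\mu=\nu$ then forces $\xi=\infty$, i.e.\ $\lambda=1$, and the converse is read off at infinity exactly as you do. You instead clear denominators in the single condition $V^{*}_{\mu}\xi=V^{*}_{\nu}\xi$ to get $(\mu-\nu)\xi^{2}+4(\mu-\nu)\xi+3\mu-4\nu=0$ (the same quadratic the paper records only later, in the proof of its next lemma), complete the square to $(\mu-\nu)(\xi+2)^{2}=\mu$, and conclude from $\mu>0$ that a finite $\xi$ forces $\mu>\nu$. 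This is both shorter and slightly stronger: it shows the second hypothesis is irrelevant to the equivalence, and it delivers the inequality $\mu>\nu$ which the paper's subsequent lemma asserts with less explicit justification (there it is implicit in the reality of $\sqrt{\mu^{2}-\mu\nu}$). Your checks that $1+\xi$ and $2+\xi$ are nonzero for $0<\lambda<1$ are right ($1+\xi=\frac{2\lambda}{1-\lambda}$, $2+\xi=\frac{1+\lambda}{1-\lambda}$). The only blemish is cosmetic: the linear coefficient of the quadratic is $4(\mu-\nu)$, not $\mu-\nu$, but the completed-square identity you state is nevertheless correct, so nothing in the argument is affected.
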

\begin{proof} 
	From $\eta = V^{*}_{\mu} \xi = V^{*}_{\nu} \xi $ we deduce
	$$\xi = \frac{1-2 \nu + \eta}{\nu -1 - \eta}= \frac{2 - 3\mu + 2 \eta}{\mu-1 - \eta}.$$
If $ \eta  = -2$ then we obtain $\frac{2 \nu +1}{\nu +1} = \frac{3 \mu +2 }{\mu +1}$ or $\mu \nu + 2 \mu +1 = 0$, a contradiction.\\ 
Since $V^{*}_{\nu \nu}y = \frac{2\nu -1 +\nu y}{2 + y}$  the condition $ \ V^{*}_{\mu}\eta = V^{*}_{\nu \nu}\eta$ implies
$$\frac{3 \mu -2 + (\mu -1)\eta}{2 + \eta} = \frac{2 \nu -1 + \nu \eta}{2 + \eta} .$$ From $\eta +2 \neq 0$ we obtain 
$$ 3 \mu -2 + (\mu -1)\eta = 2 \nu -1 + \nu \eta $$ which gives
$$\eta =  \frac{3 \mu -2\nu -1}{-\mu + \nu +1}.$$ We insert this value into 
	$$\xi = \frac{1-2\nu + \eta}{\nu -1 - \eta} = \frac{2 - 3\mu + 2 \eta }{\mu -1 - \eta}$$ and obtain
	$$\xi = \frac{2 \mu \nu - 2 \nu^2 + 2\mu - 3 \nu}{\nu^2 - \mu \nu - 2 \mu + 2 \nu } = \frac{3 \mu^2 - 3 \mu \nu + \mu - 2\nu}{- \mu^2 + \mu \nu - \mu + \nu}= -2 + \frac{- 2\mu + \nu}{\nu^2 - \mu \nu -2 \mu + 2 \nu}.$$
If $ \mu = \nu$ then we get $\xi = \infty$ which means $\lambda = 1$. 
If $\lambda =1$ then $\xi = \infty$ together with $V^{*}_{\mu} \xi = V^{*}_{\nu} \xi$ implies $ \mu = \nu$.
	\end{proof}
\begin{lemma}
If $\xi < \infty$ then $\eta = \xi$.
\end{lemma}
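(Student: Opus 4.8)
The plan is to show directly that $\xi$ and $\eta$ coincide by playing the two fractional-linear relations that $\eta$ already satisfies against each other. By definition $\eta = V^{*}_{\mu}\xi$, and the first imposed condition $V^{*}_{\mu}\xi = V^{*}_{\nu}\xi$ gives $\eta = V^{*}_{\nu}\xi$ as well. Clearing denominators in both, I would record the pair
$$\eta(2+\xi) = 3\mu - 2 + (\mu-1)\xi, \qquad \eta(1+\xi) = 2\nu - 1 + (\nu-1)\xi.$$
The reason for writing them side by side is that the product term $\eta\xi$ is common to both left-hand sides, so subtracting ought to eliminate it.

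Subtracting the second equation from the first does exactly that: the term $\eta\xi$ cancels and everything collapses to the single affine relation
$$\eta = 3\mu - 2\nu - 1 + (\mu-\nu)\xi.$$
Next I would bring in the second condition $V^{*}_{\mu}\eta = V^{*}_{\nu\nu}\eta$, which — exactly as in the proof of the preceding lemma, where one divides by $2+\eta \neq 0$ — yields $\eta = \frac{3\mu - 2\nu - 1}{-\mu + \nu + 1}$, that is, $3\mu - 2\nu - 1 = (\nu - \mu + 1)\eta$. Substituting this value of $3\mu-2\nu-1$ into the affine relation gives $\eta = (\nu - \mu + 1)\eta + (\mu-\nu)\xi$, and after cancelling the common $\eta$ this reduces to the clean factorisation
$$(\mu - \nu)(\xi - \eta) = 0.$$

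To finish, I would invoke the hypothesis $\xi < \infty$. By the lemma giving $\xi = \frac{3\lambda - 1}{1-\lambda}$, finiteness of $\xi$ forces $\lambda \neq 1$; and the preceding lemma, asserting that $\mu = \nu$ is equivalent to $\lambda = 1$, then gives $\mu \neq \nu$. Hence the factor $\mu - \nu$ is nonzero, and the product above forces $\xi = \eta$. I do not expect a genuine obstacle here: once the two relations for $\eta$ are subtracted, the computation is essentially forced. The only substantive point is the observation itself — that pairing the defining relation with the first condition makes the quadratic data cancel and exposes the factor $\xi - \eta$ — together with the bookkeeping that legitimises dividing by $\eta+2$ and translates $\xi<\infty$ into $\mu\neq\nu$ via the earlier lemmas.
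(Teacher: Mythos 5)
Your argument is correct, and it takes a genuinely different — and notably more economical — route than the paper. The paper solves the quadratic $(\mu-\nu)\xi^2+4(\mu-\nu)\xi+3\mu-4\nu=0$ coming from $V^{*}_{\mu}\xi=V^{*}_{\nu}\xi$, equates the resulting expression $\xi=-2+\frac{\sqrt{\mu^{2}-\mu\nu}}{\mu-\nu}$ with the rational formula for $\xi$ obtained in the previous lemma, distils everything into the single polynomial identity $4\mu^{2}-4\mu\nu+\mu^{2}\nu-\mu\nu^{2}=\nu$, and then separately verifies that this identity is equivalent to $\eta=\xi$ by computing the fixed points of $V^{*}_{\mu}$ and $V^{*}_{\nu}$ explicitly; this forces it to handle square roots, sign conditions such as $\mu>\nu$, and a choice of root. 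Your subtraction trick — pairing $\eta(2+\xi)=3\mu-2+(\mu-1)\xi$ with $\eta(1+\xi)=2\nu-1+(\nu-1)\xi$ so that the bilinear term $\eta\xi$ cancels, then feeding in $3\mu-2\nu-1=(\nu-\mu+1)\eta$ from the second hypothesis — stays entirely within linear algebraic identities and lands directly on the factorisation $(\mu-\nu)(\xi-\eta)=0$, after which Lemma 6 (with $\xi<\infty$, i.e.\ $\lambda\neq 1$, hence $\mu\neq\nu$) finishes the job. I checked the algebra and it is right. What your version buys is transparency: no irrationalities, no case distinction on the sign of $\mu-\nu$, and a clean exhibition of exactly where the hypothesis $\xi<\infty$ enters. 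The only bookkeeping worth a sentence is that clearing the denominators $2+\xi$ and $1+\xi$ is legitimate because $\eta=V^{*}_{\mu}\xi=V^{*}_{\nu}\xi$ is a finite number (and indeed $0<\lambda\le 1$ rules out $\xi\in\{-1,-2\}$); the paper is equally silent on this, so it is not a gap relative to the source.
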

\begin{proof}
From $V^{*}_{\mu} \xi = V^{*}_{\nu} \xi $ follows
$$ (\mu - \nu)\xi^2 + 4(\mu -\nu) \xi + 3 \mu - 4 \nu = 0.$$
Hence
$$ \xi = -2 + \frac{\sqrt{\mu^2 - \mu \nu} }{\mu - \nu} = -2 + \frac{- 2 \mu + \nu}{\nu^2 - \mu \nu -2 \mu + 2 \nu}.$$
Since $\mu > \nu$ and $\nu^2 - \mu \nu  - \mu + 2\nu =(2 + \nu)(\nu - \mu)$ we obtain
$$ \sqrt{\mu^2 - \mu \nu} = \frac {2\mu - \nu}{2+ \nu}.$$ This relation is equivalent to
$$ 4 \mu^2 - 4 \mu \nu + \mu^2 \nu - \mu \nu^2 = \nu.$$
Since the conditions $\eta = V^{*}_{\mu} \xi = V^{*}_{\nu} \xi $ and $  \eta = \xi$ imply that $\xi$ is a fixed point for  $V^{*}_{\nu}$ and for $ V^{*}_{\mu}$ we see that
$$\xi = \frac{\nu -2 + \sqrt{\nu^2 +4 \nu} }{2} = \frac{\mu -3 + \sqrt{\mu^2 + 6\mu +1} }{2}.$$
Hence we deduce
$$\mu - \nu -1 = \sqrt{\nu^2 +4 \nu} - \sqrt{\mu^2 + 6\mu +1},$$
$$ \mu \nu + 4 \mu + \nu = \sqrt{\nu^2 +4 \nu} \sqrt{\mu^2 + 6\mu +1},$$
and eventually 
$$ 4 \mu^2 - 4 \mu \nu + \mu^2 \nu - \mu \nu^2 = \nu.$$
This shows that this relation is equivalent to $\eta = \xi$.
\end{proof}

\begin{theorem} Let $S$ be a piecewise fractional linear map of type $(+,-,-)$ which satifies the conditions 
	$\eta = V^{*}_{\mu}\xi= V^{*}_{\nu}\xi$ and $  V^{*}_{\mu}\eta = V^{*}_{\nu \nu } \eta.$ 
	If $\xi = \eta$ then $S$ has a natural dual and the density of the invariant measure is given as
	$$g(x)= \frac{1}{(1+ \xi x)^2}.$$
	If $\xi \neq \eta$ then $\nu = \mu$ and $\lambda =1$. The map is a $1$-step extension and  the density of the invariant measure is given as
	$$g(x)= \frac{1}{x} \sum_{n=0}^{\infty}\frac{(-1) 
		^n}{1 + (\nu + n -1)x}.$$
	\end{theorem}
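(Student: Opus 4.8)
The plan is to read off both alternatives directly from the preceding lemmas, exactly as in the proof of Theorem~1, and then to sum the resulting series in the degenerate case.

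First I would settle the dichotomy. The lemma immediately above the theorem states that $\xi < \infty$ forces $\eta = \xi$; its contrapositive says that $\xi \neq \eta$ can occur only when $\xi = \infty$. By Lemma~6 the value $\xi = \frac{3\lambda - 1}{1-\lambda}$ is infinite precisely when $\lambda = 1$, so the hypothesis $\xi \neq \eta$ yields $\lambda = 1$, and Lemma~7 then converts this into $\mu = \nu$. This disposes of the parameter claim in the second alternative.

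Next I would treat the case $\xi = \eta$. Here the two standing conditions $\eta = V^{*}_{\mu}\xi = V^{*}_{\nu}\xi$ become $V^{*}_{\mu}\xi = \xi$ and $V^{*}_{\nu}\xi = \xi$; combined with the defining equation $V^{*}_{\lambda}\xi = \xi$ this exhibits $\xi$ as a common fixed point of all three dual branches. Consequently the invariant set $B^{*}$ of the dual map $S^{*}$ collapses to the single point $\xi$, which by the discussion in the Introduction may be regarded as a natural dual, and the density is the corresponding single-pole expression $g(x) = \frac{1}{(1+\xi x)^2}$. (Alternatively one could exhibit the conjugating map $\psi$ explicitly, as was done for the $(+,+,+)$ case.)

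It remains, in the case $\xi \neq \eta$ with $\lambda = 1$ and $\mu = \nu$, to identify $S$ as a $1$-step extension and to sum the density. A direct composition of the two branches $V_\alpha x = \frac{x}{1+x}$ and $V_\nu x = \frac{1+(\nu-1)x}{1+(2\nu-1)x}$ of the underlying two-branch map $T$ gives $V_\alpha(V_\alpha x) = \frac{x}{1+2x} = V_\lambda x$ and $V_\alpha(V_\nu x) = \frac{1+(\nu-1)x}{2+(3\nu-2)x} = V_\mu x$, while the branch $V_\nu$ is retained; hence the three inverse branches $V_\nu, V_{\alpha\alpha}, V_{\alpha\nu}$ of the jump transformation coincide with $V_\nu, V_\lambda, V_\mu$, so $S$ is exactly the $1$-step extension announced at the start of the section. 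Finally I would recover the series by telescoping Lemma~1 into $g(x) = \sum_{n=0}^{\infty} (-1)^n h(V_\alpha^n x)\,\omega_{\alpha^n}(x)$, where $h(x) = \frac{1}{x(1+(\nu-1)x)}$ is the known two-branch density; since $V_\alpha^n x = \frac{x}{1+nx}$ and $\omega_{\alpha^n}(x) = \frac{1}{(1+nx)^2}$, the factor $(1+nx)^2$ cancels and each term reduces to $\frac{1}{x(1+(n+\nu-1)x)}$, giving $g(x) = \frac{1}{x}\sum_{n=0}^{\infty}\frac{(-1)^n}{1+(n+\nu-1)x}$. The only genuine obstacle is the dichotomy itself --- that $\xi \neq \eta$ is possible solely through $\lambda = 1$ --- and this has already been discharged in the lemma asserting that $\xi < \infty$ implies $\eta = \xi$; the remaining steps are the routine branch compositions and the term-by-term reduction just indicated.
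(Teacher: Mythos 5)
Your proposal is correct and follows essentially the same route as the paper, whose proof of this theorem is simply ``the assertions follow from the foregoing discussion'': you assemble the dichotomy from the contrapositive of the lemma ``$\xi<\infty$ implies $\eta=\xi$'' together with the lemma ``$\mu=\nu$ iff $\lambda=1$'', invoke the degenerate-dual density $\frac{1}{(1+\xi x)^2}$ from the Introduction when $\xi=\eta$, and verify the branch compositions and the telescoped series that the paper records at the start of Section~2. The only difference is that you make explicit the computations ($V_{\alpha\alpha}=V_\lambda$, $V_{\alpha\nu}=V_\mu$, and the cancellation of $(1+nx)^2$ in each term) that the paper leaves implicit, and these all check out.
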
 
\begin{proof} The assertione follow from the foregoing discussion. \end{proof}

\section*{3. The case $(-,+,+)$}

It tuns out that this case is different from the cases $(+,+,+)$ and $(+,-,-)$. 
	Let $V_\alpha x = \frac{1-x}{2-x}$ and $V_{\nu}x = \frac{1 + (\nu -1)x}{2+(\nu-2)x}$. We calculate the density of the invariant measure as $$h(x) = \frac{1}{(1+(\nu -2)x)(\nu + (1-\nu)x)}.$$ 
	One further calculates the iterate $V_{\alpha}^nx = \frac{F_{n-1} - F_{n-3}x}{F_{n+1} - F_{n-1}x}$ where $F_n$ denotes the $n$-th Fibonacci number with start values $F_{-2} =1, F_{-1}= 0$ , and $F_1 =1$. Surprisingly, the terms but the last term in the partial sum of the infinite series cancel. Furthermore,  $\lim_{n \to \infty} \frac{F_n}{F_{n-2}} = \sigma +1$ where $\sigma$ satisfies $\sigma^2 = \sigma +1$, $\sigma = \frac{1+\sqrt{5}}{2}$. Then we obtain
	$$g(x) = \frac{2 - \nu}{1+(\nu -2)x} -\frac{1}{\sigma + 1 -x}.$$ Since the density is up to a multiplcative constant we can also write 
	$$g(x) = \frac{1}{(1+ (\nu-2)x)(1+ \xi x)},$$ where $\xi = \frac{-3+\sqrt{5}}{2}$.
	Inspection of the dual map
	$$V^{*}_{\lambda}y = \frac{\nu -3 -y}{3 + y}, V^{*}_{\mu}y = \frac{-1}{3 + y},  V^{*}_{\nu}y = \frac{\nu -2 +(\nu-1)y}{2+y}$$ shows that it is an exceptional dual on the interval with endpoints $\nu -2$ and $\xi$. \\
We now consider the map with three branches
$$ V_{\lambda}x = \frac{1-x}{3  +(\lambda -3)x}, \, V_{\mu}x = \frac{1+ (\mu -1)x}{3 + (2\mu -3)x}, \, V_{\nu}x = \frac{1  +(\nu -1)x }{2 + (\nu -2)x}.$$ 
The dual map is given as
	$$V^{*}_{\lambda}y = \frac{\lambda - 3 -y}{3 +y}, V^{*}_{\mu}y = \frac{2 \mu -3 +(\mu -1)y}{3 + y}, V^{*}_{\nu}y = \frac{\nu -2 + (\nu -1)y}{2 + y}.$$
Let $\eta$ be a fixed point of $V^{*}_{\nu}$ and $\xi$ a fixed point of $V^{*}_{\mu}$. Then $\eta = -1$ or $\eta = \nu -2$ and  $\xi = \frac{\mu -4 + \sqrt{\mu^2 + 4}}{2}$.
\begin{lemma} Suppose $V^{*}_{\lambda}\eta =  V^{*}_{\mu}\eta.$ If $\eta = -1$ then $\lambda = \mu.$
	If $ \eta = \nu -2$ our map has an exceptional dual.
	\end{lemma}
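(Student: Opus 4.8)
The plan is to treat the two sub-cases separately, both flowing from the single hypothesis $V^{*}_{\lambda}\eta = V^{*}_{\mu}\eta$. Since the branches $V^{*}_{\lambda}y=\frac{\lambda-3-y}{3+y}$ and $V^{*}_{\mu}y=\frac{2\mu-3+(\mu-1)y}{3+y}$ share the denominator $3+y$, and $3+\eta\neq 0$ for both admissible fixed points, I would clear denominators and equate numerators, reducing the hypothesis to the linear relation
$$\lambda-3-\eta = 2\mu-3+(\mu-1)\eta .$$
Substituting $\eta=-1$ gives $\lambda-2=\mu-2$, hence $\lambda=\mu$, which settles the first assertion at once. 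Substituting $\eta=\nu-2$ and simplifying gives instead $\lambda-1-\nu=\mu\nu-\nu-1$, that is $\lambda=\mu\nu$, and this relation is what I would carry into the construction of the dual.

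For the second assertion I would exhibit the dual map explicitly on the interval $B^{*}$ whose endpoints are $\eta=\nu-2$, the fixed point of the increasing branch $V^{*}_{\nu}$, and $\xi=\frac{\mu-4+\sqrt{\mu^{2}+4}}{2}$, the greater fixed point of the increasing branch $V^{*}_{\mu}$, i.e. the root of $\xi^{2}+(4-\mu)\xi+3-2\mu=0$. Because $V^{*}_{\mu}$ is increasing and fixes $\xi$, while $V^{*}_{\nu}$ is increasing and fixes $\nu-2$, these two branches cover the two outer pieces of $B^{*}$, and the decreasing branch $V^{*}_{\lambda}$ must cover the middle piece. The three images then tile $B^{*}$ provided the two interior endpoints match: on one side one needs $V^{*}_{\lambda}(\nu-2)=V^{*}_{\mu}(\nu-2)$, which is exactly the hypothesis, and on the other side one needs $V^{*}_{\lambda}(\xi)=V^{*}_{\nu}(\xi)$.

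The heart of the argument, and the step I expect to be the main obstacle, is verifying this second matching identity. Using $\lambda=\mu\nu$, I would cross-multiply $\frac{\mu\nu-3-\xi}{3+\xi}=\frac{\nu-2+(\nu-1)\xi}{2+\xi}$ and collect terms; the pleasant outcome I anticipate is that the resulting quadratic in $\xi$ is a nonzero multiple of the defining equation $\xi^{2}+(4-\mu)\xi+3-2\mu=0$, so that the identity holds automatically for the fixed point $\xi$ with no further restriction on the parameters. Granting monotonicity of the branches together with the admissible ranges $0<\lambda\leq 1$, $0<\mu$, $0<\nu$ to fix the orientation and ensure the three abutting images genuinely fill $B^{*}$, this establishes that $V^{*}_{\lambda},V^{*}_{\mu},V^{*}_{\nu}$ partition $B^{*}$; hence $S^{*}$ is a dual map on the genuine interval $B^{*}$, and the density is
$$\int_{B^{*}}\frac{dy}{(1+xy)^{2}}=\frac{(\nu-2)-\xi}{(1+(\nu-2)x)(1+\xi x)},$$
which up to the constant factor $(\nu-2)-\xi$ is $\frac{1}{(1+(\nu-2)x)(1+\xi x)}$.

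Finally I would argue that this dual is exceptional rather than natural. Since $B^{*}$ is a non-degenerate interval with distinct endpoints $\xi\neq\nu-2$, the density carries two distinct simple poles and is therefore not of the natural-dual form $\frac{1}{(1+\xi x)^{2}}$ (which within this framework corresponds to $B^{*}$ shrinking to a point); equivalently, one checks that the standard construction of an intertwining map $\psi(t)=\frac{B+Dt}{A+Bt}$ with $\psi\circ V_{k}=V^{*}_{k}\circ\psi$ does not close up for all three branches simultaneously. As the specialization $\mu=1$, $\lambda=\nu$ reproduces exactly the dual already identified as exceptional for the two-branch map at the start of this section, the same designation holds here, which is the claimed conclusion.
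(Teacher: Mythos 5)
Your proposal is correct and follows essentially the same route as the paper: substitute $\eta=-1$ and $\eta=\nu-2$ into the linearized condition to get $\lambda=\mu$ and $\lambda=\mu\nu$ respectively, then verify that $V^{*}_{\lambda}\xi=V^{*}_{\nu}\xi$ reduces (after inserting $\lambda=\mu\nu$ and dividing by $\nu\neq 0$) to the fixed-point equation $\xi^{2}+(4-\mu)\xi+3-2\mu=0$ of $V^{*}_{\mu}$. The additional material on the tiling of $B^{*}$ and the resulting density is consistent with the paper's surrounding discussion but goes beyond what its proof of this lemma actually records.
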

\begin{proof}
If $\eta = -1$ then we find $\lambda = \mu$. If $\eta = \nu -2$ then we find $\lambda = \mu \nu.$\\
If we suspect there is an exceptional dual we have to check $ V^{*}_{\lambda}\xi = V^{*}_{\nu} \xi.$ 
This equation leads to
$$ \nu \xi^2 + (4 \nu - \lambda)\xi + 3 \nu - 2 \lambda = 0.$$
If we insert $\lambda = \mu \nu$ we find (since $\nu \neq 0$) the  equation 
$$ \xi^2 + (4- \mu)\xi + 3  -2\mu = 0$$
which is the fixed point condition for $\xi$. 
\end{proof}
The next lemma is similar.
 \begin{lemma} Suppose $V^{*}_{\lambda}\xi =  V^{*}_{\nu}\xi.$ Then we find $\lambda = \mu \nu$. 
 \end{lemma}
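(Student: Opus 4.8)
The plan is to mimic the computation carried out in the preceding lemma, but to run it in the opposite direction: there the relation $\lambda = \mu\nu$ was assumed and shown to be consistent with the fixed-point equation for $\xi$, whereas here I start from $V^{*}_{\lambda}\xi = V^{*}_{\nu}\xi$ and must \emph{deduce} $\lambda = \mu\nu$. First I would clear denominators in $V^{*}_{\lambda}\xi = V^{*}_{\nu}\xi$, that is in $\frac{\lambda - 3 - \xi}{3 + \xi} = \frac{\nu - 2 + (\nu - 1)\xi}{2 + \xi}$. Cross-multiplying and collecting terms should give exactly the quadratic already recorded in the previous proof, namely $\nu\xi^2 + (4\nu - \lambda)\xi + 3\nu - 2\lambda = 0$.

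The crucial extra ingredient is that $\xi$ is not an arbitrary number but the fixed point of $V^{*}_{\mu}$, so it satisfies the relation $\xi^2 + (4 - \mu)\xi + 3 - 2\mu = 0$ already displayed before this lemma. The idea is to use this to remove the quadratic term. Multiplying the fixed-point relation by $\nu$ and subtracting it from the quadratic above eliminates $\nu\xi^2$ and leaves a purely linear relation in $\xi$. A short computation shows that the coefficient of $\xi$ becomes $\mu\nu - \lambda$ and the constant term becomes $2(\mu\nu - \lambda)$, so the relation factors neatly as $(\mu\nu - \lambda)(\xi + 2) = 0$.

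It then remains to rule out the factor $\xi + 2$. Since $\xi = \frac{\mu - 4 + \sqrt{\mu^2 + 4}}{2}$, one computes $\xi + 2 = \frac{\mu + \sqrt{\mu^2 + 4}}{2}$, which is strictly positive for every admissible $\mu > 0$; hence $\xi \neq -2$ and necessarily $\lambda = \mu\nu$. I do not expect any genuine obstacle here: the only point requiring a moment's care is the sign of $\xi + 2$, and everything else is the same elimination trick used in the previous lemma. The pleasant feature is that the fixed-point condition for $V^{*}_{\mu}$ does all the work, so that the two hypotheses ($V^{*}_{\lambda}\xi = V^{*}_{\nu}\xi$ together with $\xi$ being fixed by $V^{*}_{\mu}$) force $\lambda = \mu\nu$ with no case distinction, which is presumably why the author remarks that this lemma is similar to the foregoing one.
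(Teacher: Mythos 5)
Your proof is correct and uses the same key step as the paper: combining the quadratic $\nu\xi^2+(4\nu-\lambda)\xi+3\nu-2\lambda=0$ with the fixed-point relation $\xi^2+(4-\mu)\xi+3-2\mu=0$ to get $(\mu\nu-\lambda)(\xi+2)=0$, then excluding $\xi=-2$. The paper reaches the same factorization but only after first splitting into cases according to whether $\xi$ is a rational function of the parameters (matching coefficients of the square roots in the irrational case); your uniform elimination renders that case distinction unnecessary and is, if anything, tidier.
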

\begin{proof}
The condition $V^{*}_{\lambda}\xi =  V^{*}_{\nu}\xi$ leads to
$$ \nu \xi^2 + (4 \nu - \lambda)\xi + 3 \nu - 2 \lambda = 0.$$
Then we see that   $$\xi = \frac{\lambda -4\nu + \sqrt{4\nu^2 + \lambda^2}}{2\nu} = 
\frac{\mu -4 + \sqrt{\mu^2 + 4}}{2}.$$ 
If $\xi$ is not a rational function of the parameters $\lambda, \mu,\nu$, then we find $4 \nu - \lambda = \nu(4 - \mu)$ and hence $\lambda = \mu \nu$.
 If $\xi$ is a rational function of the parameters $\lambda, \mu,\nu$ we calculate
 $$ - \xi^2 = \frac{4 \nu - \lambda}{\nu}\xi  + \frac{3\nu - 2\lambda}{\nu}= (4-\mu)\xi + 3 - 2\mu$$  
 Then we see that
 $$\xi = \frac{2 \lambda - 2 \mu \nu}{-\lambda + \mu \nu} = -2$$ if $ \lambda \neq \mu \nu$. But $\xi = -2$ is not an allowed value.\\ If $\lambda  = \mu$, then we find
 $$\xi =\frac{\mu -4\nu + \sqrt{4\nu^2 + \mu^2}}{2\nu} = 
 \frac{\mu -4 + \sqrt{\mu^2 + 4}}{2}.$$ This equation leads to $$\mu \nu - \mu = \sqrt{4 \nu^2 + \mu^2}  - \sqrt{\mu^2 \nu^2  + 4\nu^2}$$  and eventually to $1 + \nu^2 = 2\nu$, hence $\nu =1$. 
 \end{proof}
 \begin{lemma} If $V^{*}_{\lambda}\xi =  V^{*}_{\nu}\xi$ and $\xi = \eta$, then $\lambda = \mu \nu$ and $\lambda = \nu^2 -1$.
\end{lemma}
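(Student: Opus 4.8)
The plan is to observe first that the standing hypothesis $V^{*}_{\lambda}\xi = V^{*}_{\nu}\xi$ is precisely the hypothesis of the preceding lemma, so the relation $\lambda = \mu\nu$ is already available and may simply be quoted. What remains is to produce the second identity $\lambda = \nu^2 - 1$ from the extra assumption $\xi = \eta$, and the whole point is to convert that assumption into fixed-point statements that can be handled by elimination.

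First I would translate $\xi = \eta$ into the statement that $\xi$ is a fixed point of $V^{*}_{\nu}$. Indeed $\eta$ is by definition a fixed point of $V^{*}_{\nu}$, so $\xi = \eta$ gives $V^{*}_{\nu}\xi = \xi$. Feeding this into the hypothesis $V^{*}_{\lambda}\xi = V^{*}_{\nu}\xi$ yields $V^{*}_{\lambda}\xi = \xi$, so that $\xi$ is at the same time a fixed point of $V^{*}_{\lambda}$ and of $V^{*}_{\mu}$, the latter holding by the very definition of $\xi$. Reading off the fixed-point conditions from the explicit dual branches $V^{*}_{\lambda}y = \frac{\lambda - 3 - y}{3 + y}$ and $V^{*}_{\mu}y = \frac{2\mu - 3 + (\mu-1)y}{3+y}$ produces the two quadratics
$$\xi^2 + 4\xi + 3 - \lambda = 0, \qquad \xi^2 + (4-\mu)\xi + 3 - 2\mu = 0.$$

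Next I would subtract the second quadratic from the first. The $\xi^2$ terms and the constant $3$ cancel, leaving the linear relation $\mu\xi + 2\mu - \lambda = 0$, hence $\xi = \frac{\lambda}{\mu} - 2$. Inserting the known identity $\lambda = \mu\nu$ collapses this to $\xi = \nu - 2$. Finally, substituting $\xi = \nu - 2$ back into the fixed-point equation $\xi^2 + 4\xi + 3 - \lambda = 0$ of $V^{*}_{\lambda}$ gives $(\nu-2)^2 + 4(\nu-2) + 3 - \lambda = \nu^2 - 1 - \lambda = 0$, that is $\lambda = \nu^2 - 1$, which completes the argument.

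Since each step is a short algebraic manipulation, there is no genuine analytic obstacle; the only point requiring care is the bookkeeping of which fixed point of $V^{*}_{\nu}$ is in play. The division leading to $\xi = \nu - 2$ is legitimate because $\mu \neq 0$ and because $\lambda = \mu\nu$ is available, and the very same computation shows that the alternative value $\eta = -1$ cannot occur, so no separate case distinction on $\eta$ is needed.
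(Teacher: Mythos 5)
Your proof is correct, and it takes a slightly different and in fact cleaner route than the paper. The paper's argument splits into the two cases $\eta=-1$ and $\eta=\nu-2$ and equates each candidate value with the closed-form root $\xi=\frac{\mu-4+\sqrt{\mu^2+4}}{2}$ of the $V^{*}_{\mu}$ fixed-point quadratic, dismissing the first case via $\mu=0$ and squaring the radical in the second to reach $\nu^2=1+\mu\nu=\lambda+1$. You instead observe that $\xi=\eta$ together with $V^{*}_{\lambda}\xi=V^{*}_{\nu}\xi$ makes $\xi$ a common fixed point of $V^{*}_{\lambda}$ and $V^{*}_{\mu}$, and you eliminate $\xi^2$ between the two quadratics $\xi^2+4\xi+3-\lambda=0$ and $\xi^2+(4-\mu)\xi+3-2\mu=0$ to get $\xi=\lambda/\mu-2=\nu-2$ directly; substituting back yields $\lambda=\nu^2-1$ with no radicals and no case distinction on which fixed point of $V^{*}_{\nu}$ the point $\eta$ is. Both arguments lean on the preceding lemma for $\lambda=\mu\nu$ (the paper uses it silently in the step $1+\mu\nu=\lambda+1$), so quoting it is exactly what the paper does too. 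The one place where your write-up is slightly glib is the closing claim that ``the very same computation shows that $\eta=-1$ cannot occur'': what the computation actually shows is $\xi=\nu-2$, so $\eta=-1$ would force $\nu=1$ and hence $\lambda=\nu^2-1=0$, which degenerates the branch $V_{\lambda}$ (and, via $\lambda=\mu\nu$, forces $\mu=0$, the paper's contradiction); it would be worth spelling out that half-line, but it does not affect the validity of the conclusion, which your elimination establishes uniformly in $\eta$.
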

If $ \eta = -1$ then $-1 = \frac{\mu -4 + \sqrt{\mu^2 + 4}}{2}$ leads to $\mu =0$, a contradiction. If $\eta = \nu -2$ then from the equation
$$ \nu -2 = \frac{\mu -4 + \sqrt{\mu^2 + 4}}{2}$$ follows $ \nu^2 = 1 + \mu \nu = \lambda +1$.
\begin{example} (a) We take $\lambda =3$, $ \mu = \frac{3}{2}$, and $\nu = 2$. In this (linear) case we find $\xi = \eta =0$ and $h(x) =1$.\\
(b)  If we take $\lambda =8$, $ \mu = \frac{8}{3}$, and $\nu = 3$, then we have $\xi = \eta =1 $ and $h(x) =\frac{1}{(1+x)^2}$.	
\end{example}
\begin{lemma} If $V^{*}_{\lambda}\eta =  V^{*}_{\mu}\eta$ and $V^{*}_{\lambda}\xi =  V^{*}_{\nu}\xi$ 
	Then $\eta = -1$ implies $\xi \neq \eta$, $\lambda = \mu $, and $\nu = 1$.
	\end{lemma}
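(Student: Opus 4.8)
The plan is to read off all three assertions from the two earlier lemmas of this section together with one short incompatibility check; essentially no new computation is required, since both hypotheses are exactly the ones those lemmas analyse.

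First I would settle the two easy conclusions. As $\eta = -1$, the hypothesis $V^{*}_{\lambda}\eta = V^{*}_{\mu}\eta$ is precisely the situation of the first lemma of this section, whose conclusion in the case $\eta = -1$ is $\lambda = \mu$; so this identity is immediate. For $\xi \neq \eta$ it then suffices to show $\xi \neq -1$: substituting $\xi = \frac{\mu - 4 + \sqrt{\mu^2 + 4}}{2}$ and setting it equal to $-1$ gives $\sqrt{\mu^2 + 4} = 2 - \mu$, which upon squaring forces $\mu = 0$, contradicting $\mu > 0$. This is the same elementary computation already used in the discussion preceding Example~1, so I would simply cite it; hence $\xi \neq -1 = \eta$.

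To obtain $\nu = 1$ I would feed the second hypothesis $V^{*}_{\lambda}\xi = V^{*}_{\nu}\xi$, now together with the established relation $\lambda = \mu$, into the lemma asserting $\lambda = \mu\nu$. Its proof contains exactly the sub-case $\lambda = \mu$, in which the fixed-point identity for $\xi$ reduces to $\mu\nu - \mu = \sqrt{4\nu^2 + \mu^2} - \sqrt{\mu^2\nu^2 + 4\nu^2}$ and hence to $1 + \nu^2 = 2\nu$, forcing $\nu = 1$. As a consistency check I note that $\nu = 1$ makes $\nu - 2 = -1$, so the two fixed points of $V^{*}_{\nu}$ merge at $\eta = -1$, exactly the case under consideration.

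The only delicate point, and the one place where I would be careful, is the bookkeeping in this last step. The earlier lemma phrases its conclusion as $\lambda = \mu\nu$ but branches according to whether $\xi$ is a rational function of the parameters, and one must land in the branch compatible with $\lambda = \mu$ rather than in the forbidden value $\xi = -2$. I would therefore make explicit that $\lambda = \mu$ together with $\nu > 0$ excludes $\xi = -2$, so that the radical computation proceeds cleanly and yields $\nu = 1$.
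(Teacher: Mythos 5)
Your proposal is correct and follows essentially the same route as the paper, which simply asserts that the lemma ``follows from Lemma 10 and Lemma 11'': you read $\lambda=\mu$ off the first lemma of this section, $\xi\neq-1$ off the $\mu=0$ contradiction in the lemma on $\xi=\eta$, and $\nu=1$ off the sub-case $\lambda=\mu$ in the proof of the lemma giving $\lambda=\mu\nu$. Your version is just a more explicit unpacking of those citations (including the extra care about the excluded value $\xi=-2$), so nothing further is needed.
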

This follows from Lemma 10 and Lemma 11. 
\begin{theorem}
Suppose $V^{*}_{\lambda}\eta =  V^{*}_{\mu}\eta$ and $V^{*}_{\lambda}\xi =  V^{*}_{\nu}\xi$ . If $\eta = -1$ then $\lambda = \mu$ and $\nu =1$. If $\eta = \nu -2$, then we find $\lambda = \mu \nu$. It is a $1$-step extension if $\mu =1$ (and therefore $\lambda = \nu)$.\\
In both cases the map with three branches has an exceptional dual. The density of its invariant measure is given as
$$g(x)= \frac{1}{(1+(\nu -2)x)(1 + \xi x)}$$
where $\xi$ is the rightmost fixed point of $V^{*}_{\mu}$.
\end{theorem}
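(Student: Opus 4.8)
The plan is to assemble the final theorem directly from the three preceding lemmas (Lemma~10, Lemma~11, Lemma~12) together with the explicit density computation that opens Section~3. The structural backbone is a case split on the two possible fixed points of $V^{*}_{\nu}$, namely $\eta = -1$ and $\eta = \nu - 2$, which were identified immediately before Lemma~10. I would treat each case in turn and show that the two hypotheses $V^{*}_{\lambda}\eta = V^{*}_{\mu}\eta$ and $V^{*}_{\lambda}\xi = V^{*}_{\nu}\xi$ pin down the parameter relations claimed.

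First I would dispose of the case $\eta = -1$. Here Lemma~10 gives $\lambda = \mu$ from the first hypothesis, while Lemma~11 applied to the second hypothesis forces $\lambda = \mu\nu$; combining $\lambda = \mu$ with $\lambda = \mu\nu$ yields $\nu = 1$ (using $\mu \neq 0$). Lemma~12 records that in this subcase $\xi \neq \eta$, so we land in the genuinely non-degenerate configuration rather than the collapsed one. Next, for $\eta = \nu - 2$, the proof of Lemma~10 already shows the first hypothesis gives $\lambda = \mu\nu$, and Lemma~11 confirms the second hypothesis is consistent with precisely this relation (the fixed-point equation for $\xi$ reappears after substituting $\lambda = \mu\nu$). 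The special subcase $\mu = 1$ then yields $\lambda = \nu$, and I would note that this is exactly the parameter specialization under which the three-branch map reduces to the $1$-step extension whose density was computed at the start of the section.

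For the density assertion, the key observation is that Section~3 already establishes, for the $1$-step extension, that the infinite series telescopes: all but the last term in each partial sum cancel by the Fibonacci recursion $V_{\alpha}^n x = \frac{F_{n-1} - F_{n-3}x}{F_{n+1} - F_{n-1}x}$, and the limit produces
$$g(x) = \frac{1}{(1 + (\nu-2)x)(1 + \xi x)}$$
with $\xi$ the relevant fixed point. I would argue that in both branches of the theorem the map carries an exceptional dual on the interval with endpoints $\nu - 2$ and $\xi$ (exactly as displayed for the base case via the dual $V^{*}_{\lambda}, V^{*}_{\mu}, V^{*}_{\nu}$), and the density of an exceptional dual is the product $h(x) = \int_{B^{*}} \frac{dy}{(1+xy)^2}$ evaluated over that interval, which integrates to the displayed two-pole form. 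The point is that the two poles of $g$ sit at the two endpoints $-1/(\nu-2)$ and $-1/\xi$ of the dual interval, and $\xi$ is identified as the rightmost fixed point of $V^{*}_{\mu}$.

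The main obstacle I anticipate is verifying that the exceptional dual genuinely lives on the interval with endpoints $\nu - 2$ and $\xi$, i.e. that these are the correct limiting endpoints of $B^{*}$ under iteration of the dual branches, rather than merely candidate fixed points; this requires checking that $V^{*}_{\mu}$ and $V^{*}_{\nu}$ (and $V^{*}_{\lambda}$ under the constraint $\lambda = \mu\nu$) map this interval into itself with the endpoints as attracting/fixed behavior, so that the integral representation of $h$ applies. The algebraic relations $\lambda = \mu\nu$ and, in the degenerate subcase, $\lambda = \nu^2 - 1$ from Lemma~12 are precisely what make the fixed-point conditions coincide, so the remaining work is to confirm the geometric claim about $B^{*}$ and then perform the elementary integration of $\int \frac{dy}{(1+xy)^2}$ between the two endpoints, which yields the stated product of two linear factors in the denominator.
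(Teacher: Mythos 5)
Your proposal is correct and follows essentially the same route as the paper, whose proof of this theorem is literally the one-line assembly of the preceding lemmas (the case split on $\eta=-1$ versus $\eta=\nu-2$, yielding $\lambda=\mu$, $\nu=1$ in the first case and $\lambda=\mu\nu$ in the second, with the exceptional dual on the interval with endpoints $\nu-2$ and $\xi$). You add useful detail the paper leaves implicit --- in particular the explicit integration $\int_{B^{*}}\frac{dy}{(1+xy)^{2}}$ over that interval to produce the two-pole density, and the honest observation that the identification of $B^{*}$ with the interval with endpoints $\nu-2$ and $\xi$ is asserted by ``inspection'' rather than proved --- so no genuine gap beyond what the paper itself glosses over.
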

\begin{proof} The proof follows directly from the foregoing discussion.
	\end{proof}
 Note that in the case $\eta = \nu -2$  the conditions  $V^{*}_{\lambda}\eta =  V^{*}_{\mu}\eta$ and $V^{*}_{\lambda}\xi =  V^{*}_{\nu}\xi$ are equivalent. The case of a $1$-step extension is not visible from the conditions of the theorem. Since the case $(-,+,+)$ was not considered in the section on exceptional duals in \cite{Sch06} a proof has been included.
\begin{example}
If $V^{*}_{\lambda}\eta =  V^{*}_{\mu}\eta$ and $\eta =-1$ then $\nu \neq 1$ is possible. An example is given by $\lambda = \mu = \nu = 2$. Then we have 
$$V^{*}_{\lambda}y = \frac{ -1 - y}{3+y}, V^{*}_{\mu}y = \frac{1 + y}{3 + y}, V^{*}_{\nu}y = \frac{y}{2 + y}.$$
Then $V^{*}_{\lambda}(-1) =  V^{*}_{\mu}(-1) = 0$. We find $\xi = \sqrt{2} -1$ but $V^{*}_{\lambda}(\sqrt{2} -1) \neq  V^{*}_{\nu}(\sqrt{2} -1)$.
\end{example}

\section*{4. $n$-step extensions}
The construction of the $1$-step extension of a map with 2 branches which uses $V_\beta$ leads to a map with 3 branches This construction can be iterated using $V_\beta^2 = V_{\beta \beta}$ and leads to a map with 5 branches which we call the $2$-step extension. By iterating this process we construct the {\em $n$-step extension} using $V_\beta^n$ which is map with $2^n +1$ branches. We denote the Jacobian of $V_\beta^n$ by $\omega_\beta^n$.\\
From the equality $g(x)= h(x) - g(V_\beta x)\omega_\beta(x)$ we found
$$g(x)= h(x)- h(V_\beta x)\omega_\beta(x)$$ $$ + h(V_\beta ^2x)\omega_\beta^2(x) - h(V_\beta^3 x)\omega_\beta^3(x)+ h(V_\beta^4 x)\omega_\beta^4(x) - h(V_\beta^5x)\omega_\beta^5(x) + ... .$$
Let $g_n(x)$ be the density of the invariant measure of the $n$-step extension we have the relation
$g(x)= g_{n-1}(x) -g_n(V_\beta^n x)\omega_\beta^n(x)$ where $g_0(x)=h(x) $ and $g_1(x)= g(x)$. Then we can express $g_n(x)$ as a series which uses the function $h(x)$ only. We find
$$g_2(x)= h(x)- h(V_\beta x)\omega_\beta(x)$$ $$ + h(V_\beta^4 x)\omega_\beta^4(x) - h(V_\beta^5x)\omega_\beta^5(x)+ h(V_\beta^8x)\omega_\beta^8(x) - h(V_\beta^9x)\omega_\beta^9(x) + ... .$$
and
$$g_3(x)= h(x)- h(V_\beta x)\omega_\beta(x)$$ $$ + h(V_\beta^8x)\omega_\beta^8(x) - h(V_\beta^9x)\omega_\beta^9(x)+ h(V_\beta^{16}x)\omega_\beta^{16}(x) - h(V_\beta^{17}x)\omega_\beta^{17}(x) + ... .$$
The following theorem is immediate.
\begin{theorem} The sequence $(g_n(x))$ is convergent and
	$$g_{\infty} (x)= \lim_{n \to \infty} g_n(x) = h(x) -  h(V_\beta x)\omega_\beta(x)= h(V_\alpha x)\omega_\alpha(x).$$
\end{theorem}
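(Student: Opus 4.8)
The plan is to group the series for $g_n$ into nonnegative telescoping blocks and to bound the tail, after recording two structural facts about the summands. Put $a_m(x)=h(V_\beta^m x)\,\omega_\beta^m(x)\ge 0$, so that by the cocycle identity $\omega_\beta^{m}(V_\beta^{p}x)\,\omega_\beta^{p}(x)=\omega_\beta^{m+p}(x)$ one has $a_{m+p}(x)=a_m(V_\beta^p x)\,\omega_\beta^p(x)$. Iterating the jump relation (equivalently, by induction on $n$ from the recurrence in the text) gives the closed form
$$g_n(x)=\sum_{\substack{m\ge 0\\ m\equiv 0,1\ (2^n)}}(-1)^m a_m(x)=\sum_{k=0}^{\infty}\big(a_{2^n k}(x)-a_{2^n k+1}(x)\big),$$
which for $n=1,2,3$ reproduces the displayed expansions for $g$, $g_2$, $g_3$. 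The essential observation is that $g_n$ keeps exactly the terms of the series for $g_1=g$ whose index is $\equiv 0$ or $1$ modulo $2^n$, paired so that each block $a_{2^n k}-a_{2^n k+1}$ is a single \emph{consecutive} difference.

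First I would show that $(a_m(x))$ is nonincreasing. The invariant density $h$ of the two-branch map $T$ satisfies its defining transfer equation $h(x)=h(V_\alpha x)\,\omega_\alpha(x)+h(V_\beta x)\,\omega_\beta(x)$, i.e.\ $a_0(x)-a_1(x)=h(V_\alpha x)\,\omega_\alpha(x)\ge 0$. Substituting $x\mapsto V_\beta^m x$ and multiplying by $\omega_\beta^m(x)$, the cocycle identity turns this into
$$a_m(x)-a_{m+1}(x)=h\!\big(V_\alpha (V_\beta^m x)\big)\,\omega_\alpha(V_\beta^m x)\,\omega_\beta^m(x)\ge 0,$$
so $a_0\ge a_1\ge a_2\ge\cdots\ge 0$. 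Next, since $g_1=g$ exists and is represented by the convergent alternating series $\sum_m(-1)^m a_m$, its general term must tend to zero; hence $a_m(x)\to 0$. Consequently the consecutive differences $d_m:=a_m-a_{m+1}\ge 0$ satisfy $\sum_{m\ge N} d_m = a_N(x)$ for every $N$.

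Now I would pass to the limit. Splitting off the block $k=0$ gives
$$g_n(x)=\big(a_0(x)-a_1(x)\big)+\sum_{k\ge 1}\big(a_{2^n k}(x)-a_{2^n k+1}(x)\big),$$
and since every block with $k\ge 1$ is one of the nonnegative differences $d_m$ with index $m=2^nk\ge 2^n$, the tail is controlled by
$$0\le \sum_{k\ge 1}\big(a_{2^n k}(x)-a_{2^n k+1}(x)\big)\le \sum_{m\ge 2^n} d_m(x)=a_{2^n}(x)\longrightarrow 0.$$
Therefore $g_\infty(x)=\lim_{n\to\infty}g_n(x)=a_0(x)-a_1(x)=h(x)-h(V_\beta x)\,\omega_\beta(x)$, and the same transfer equation rewrites this as $h(V_\alpha x)\,\omega_\alpha(x)$, which is the assertion.

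The step I expect to be the real obstacle is the decay $a_m(x)\to 0$, because the fixed point of the iterated branch $V_\beta$ is in these examples a neutral (parabolic) point at which $h$ has a pole; there $\omega_\beta^m$ decays only like $m^{-2}$ while $h(V_\beta^m x)$ grows like $m$, so $a_m\sim c/m$ and the series $\sum_m a_m$ \emph{diverges}. Absolute convergence is thus unavailable, and a naive ``tail of a convergent series'' argument fails. The device that rescues the proof is the monotonicity of $(a_m)$ established above: it permits regrouping $g_n$ into nonnegative consecutive differences and bounding the entire $k\ge 1$ tail by the single quantity $a_{2^n}(x)$, whose vanishing is precisely the decay already in hand.
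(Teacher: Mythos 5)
Your proof is correct, and it supplies substantially more than the paper does: the paper declares the theorem ``immediate'' after displaying the series for $g_2$ and $g_3$ and gives no proof at all. Your closed form $g_n(x)=\sum_{k\ge 0}\bigl(a_{2^nk}(x)-a_{2^nk+1}(x)\bigr)$ with $a_m(x)=h(V_\beta^m x)\omega_\beta^m(x)$ matches those displays, and you correctly identify the one point that is genuinely not immediate: since $a_m\sim c/m$, the series for $g$ converges only conditionally, so the sub-series forming the tail of $g_n$ cannot be dismissed as ``the tail of a convergent series.'' Your device --- using the Kuzmin equation $h(x)=h(V_\alpha x)\omega_\alpha(x)+h(V_\beta x)\omega_\beta(x)$ together with the cocycle identity to get $a_m(x)-a_{m+1}(x)=h(V_\alpha V_\beta^m x)\,\omega_\alpha(V_\beta^m x)\,\omega_\beta^m(x)\ge 0$, so that the $k\ge 1$ part of $g_n$ is a sum of nonnegative consecutive differences dominated by the telescoping total $a_{2^n}(x)\to 0$ --- is exactly what is needed, and the final identification $a_0-a_1=h(V_\alpha x)\omega_\alpha(x)$ is again the Kuzmin equation. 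The only ingredient you take on faith is the closed-form expansion of $g_n$ itself (unrolling the recurrence into an infinite series tacitly requires the remainder $g_n(V_\beta^M x)\omega_\beta^M(x)\to 0$), but the paper asserts those expansions outright, so you are on the same footing as the source there; everything you add on top of them is sound.
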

\begin{remark}
	The function $g_{\infty}(x) = h(V_\alpha x)\omega_\alpha (x)$ is the density of the jump transformation $U$ defined by
	$$Ux = T^nx, \, T^j x \in B(\beta), 0 \leq j \leq n-2, \, T^{n-1}x \in B(\alpha) .$$ This result can be found in \cite{Sch95}. However, the direct proof is easy. 
	$$\sum_{n=0}^{\infty}g_{\infty}(V_\beta^nV_\alpha x)\omega_\beta^n(V_\alpha x)\omega_\alpha(x)$$ $$ = \sum_{n=0}^{\infty}h(V_\beta^nV_\alpha x)\omega_\beta^n(V_\alpha x)\omega_\alpha(x) - \sum_{n=1}^{\infty}h(V_\beta^nV_\alpha x)\omega_\beta^n(V_\alpha x)\omega_\alpha(x)$$  $$ = h(V_\alpha x)\omega_\alpha(x) = g_{\infty} (x).$$
	\end{remark}
\begin{remark} In the case (-,+,+) the series for $g(x)$ reduces to $g(x)=g_1(x) = \frac{1}{(1 + (\nu -2)x)(1 + \xi x)}$. However, $g_2(x)$ is an infinite series. If $\nu =1$ then $h(x)=\frac{1}{1-x}$ and $g(x)= g_1(x) = \frac{1}{(1-x)(1 +\xi x)}$: We find
	$$g_2(x)= \frac{1}{1-x} -\frac{1}{2-x}
 + \frac{2}{5-2x} - \frac{5}{13-5x}+ \frac{13}{34- 13x} - \frac{34}{89 -34x} + ... .$$
 \end{remark} 
	\bigskip
	
\end{document}